\documentclass[english]{extarticle}
\usepackage[T1]{fontenc}
\usepackage[latin9]{inputenc}
\usepackage[a4paper]{geometry}
\geometry{verbose,tmargin=2cm,bmargin=2cm,lmargin=3cm,rmargin=3cm}
\setcounter{secnumdepth}{2}
\setcounter{tocdepth}{2}
\usepackage{color}
\usepackage{babel}
\usepackage{float}
\usepackage{mathrsfs}
\usepackage{amsmath}
\usepackage{amsthm}
\usepackage{amssymb}
\usepackage{graphicx}
\usepackage[numbers]{natbib}
\usepackage[unicode=true,pdfusetitle,
 bookmarks=true,bookmarksnumbered=false,bookmarksopen=false,
 breaklinks=false,pdfborder={0 0 1},backref=slide,colorlinks=true]
 {hyperref}
\hypersetup{
 linkcolor=blue, urlcolor=marineblue, citecolor=blue, pdfstartview={FitH}, hyperfootnotes=false, unicode=true}

\def \ff{\infty}
\def \AA{{\cal A}}
\def \BB{{\cal B}}

\def \DD{{\mathbb D}}
\def \CC{{\mathbb C}}

\def \II{{\cal I}}

\def \OO{{\cal O}}

\def \RR{{\mathbb R}}

\def \vski{\vspace{12pt}}

\newcommand{\seg}[2]{\stackrel{\line(1,0){#1}}{#2}}

\def \D{\Delta}
\def \ep{\epsilon}

\def \si{\sigma}

\def \thh{\theta}

\makeatletter
\numberwithin{equation}{section}
\numberwithin{figure}{section}
\theoremstyle{plain}
\newtheorem{thm}{\protect\theoremname}
  \theoremstyle{definition}
  \newtheorem{defn}[thm]{\protect\definitionname}
  \theoremstyle{plain}
  \newtheorem{prop}[thm]{\protect\propositionname}
  \newtheorem{corollary}[thm]{\protect\corollaryname}
  \theoremstyle{definition}
  \newtheorem{example}{\protect\examplename}

\usepackage[T1]{fontenc}
\usepackage{ae,aecompl}
\usepackage[T1]{fontenc}
\usepackage{amsmath}
\usepackage{graphicx}

\usepackage{tikz}

\makeatother

  \providecommand{\definitionname}{Definition}
  \providecommand{\examplename}{Example}
  \providecommand{\propositionname}{Proposition}
  \providecommand{\corollaryname}{Corollary}
\providecommand{\theoremname}{Theorem}

\addtolength{\oddsidemargin}{-.3in}
\addtolength{\evensidemargin}{-.3in}
\addtolength{\textwidth}{.6in}

\begin{document}

\title{\textbf{Maximizing the $p$-th moment of exit time of planar Brownian motion from a given domain}}

\author{Maher Boudabra, Greg Markowsky}
\maketitle
\begin{abstract}
In this paper we address the question of finding the point which maximizes the $p$-th moment of the exit time of planar Brownian motion from a given domain. We present a geometrical method of excluding parts of the domain from consideration which makes use of a coupling argument and the conformal invariance of Brownian motion. In many cases the maximizing point can be localized to a relatively small region. Several illustrative examples are presented.
\end{abstract}

\vski

2010 Mathematics subject classification: 60J65.

\vski

Keywords: Planar Brownian motion, exit time.

\section{Introduction}

Let $Z_{t}:=X_{t}+iY_{t}$ be a planar Brownian motion starting at a point $a$ in a domain $U$. We will let $\tau_U = \tau_U(a)$ be the first time that $Z_t$ exits $U$, and we will use the standard notation $E_a$ to denote expectation conditioned on $Z_0 = a$ a.s. The focus of this paper is the following optimization problem.

\vski

{\it For a given domain in the plane and $0< p < \ff$, find the point $a$ which maximizes the quantity $E_a[(\tau_U)^p]$. }

\vski

We will refer to such a point as a {\it $p$-th center} of $U$; it is not in general unique, as the easy example of an infinite strip shows. For many domains, even simple ones such as an isosceles triangle, it is difficult to find any of the $p$-th centers, however we will show how elementary coupling arguments and the conformal invariance of Brownian motion in many cases allows us to locate a small region in $U$ which must contain all $p$-th centers. In certain cases in which the domain in question has a high degree of symmetry, it will allow us to locate all $p$-th centers.

\vski

Before describing our methods, we present a brief overview of some earlier works related to this problem. The case $p=1$ is commonly referred to as the "torsion problem" due to its connection with mechanics, and is naturally the most tractable. The function $h(a) = E_a[\tau_U]$ satisfies $\Delta h = -2$, and therefore p.d.e. techniques can be employed to great effect. \cite[Ch. 6]{sperb} contains a good account of this problem and methods of attacking it in special cases, such as when the domain in question is convex. Further results along the same lines, focussing in particular on convex domains, can be found in \cite{keady,makar,philip}.

\vski

Other interesting related problems have been tackled by p.d.e. methods. For example, in the famous paper \cite{bannydrum} (see also the related work \cite{bannymax}) eigenvalue techniques are used to demonstrate relationships between $E_a[\tau_U]$ and geometric qualities of the domain, such as the size of the hyperbolic density and the inradius (the radius of the largest disk contained in the domain). The methods developed there have been extended by other authors in a number of different directions. For example, in \cite{mendez} a number of related stochastic domination results were proved concerning convex domains in $\RR^n$ and various types of symmetrizations. These results allow conclusions to be reached concerning the comparison of $p$-th moments of the exit times from these domains. One striking consequence of the eigenvalue methods is the fact that over all domains with a given area the disk maximizes the $p$-th moment of the exit time of Brownian motion for all $p$. The recent work \cite{kimmy} contains a discussion and refinements of this result.

\vski

Our results differ from those described above in the following ways. We have not employed p.d.e. methods at all, choosing instead to work with an elementary coupling method. Perhaps as a consequence of this, convexity plays little role in our discussion, although a weaker concept called $\Delta$-convexity (defined below) will be important. The type of coupling we will use is not entirely new, and has found a number of uses in related topics, for instance in investigations into the "hot spots" conjecture such as \cite{bannyhot, bannyhot2, pascu}. However, we believe that it has not been applied directly in the manner that we do before. Furthermore, we restrict our attention to two dimensions, which allows conformal mappings to take prominence and to extend the standard notion of coupling. We present several methods for localizing the $p$-centers of a domain, and then consider a number of specific domains, showing in each case how our methods can be used to localize the $p$-th centers of the domain. In what follows we assume $p$ is a fixed positive number; however, in order to reduce the qualifications needed to state our results, for any planar domain $U$ for which we are interested in maximizing the $p$-th moment we will assume that $E_a[(\tau_U)^p]<\ff$ for all points $a \in U$; this would follow if $E_a[(\tau_U)^p]<\ff$ for any $a \in U$, as is shown in \cite{burk}.

\section{Partial symmetry and convexity with respect to a line.}

\begin{defn}
Let $U$ be a domain of $\mathbb{C}$. We say that a line $\Delta:ax+by+c=0$
is a \emph{partial symmetry axis for $U$} if one of the two sets
$U^{+}:=U\cap\left\{ ax+by+c>0\right\} $ or $U^{-}:=U\cap\left\{ ax+by+c<0\right\} $
can be folded over $\Delta$ and fits into $U$, more precisely if $\sigma_{\Delta}(U^{+})$
or $\sigma_{\Delta}(U^{-})$ remains inside $U$, where $\sigma_{\Delta}$
denotes the symmetry over $\Delta$. The subset among $U^{\pm}$ that
satisfies this property (that is, the smaller side with respect to the symmetry) is called \emph{symmetric side of $U$ over
$\Delta$}. So, for instance, any line intersecting $\DD = \{|z|<1\}$ is a partial symmetry axis for $\DD$, but the line $y=2x$ is not one for the square $\{|x| < 1, |y|<1\}$ since the reflection over this line of the point $(1,1)$ is the point $(\frac{1}{5}, \frac{7}{5})$, which is not in the closure of the square. Notice that both of $U^{\pm}$ are symmetric sides if
and only if $\Delta$ is a symmetry axis for $U$.

\end{defn}

\begin{thm}
\label{prop:part.sym.axis}Let $S$ be the symmetric side of $U$ over
a partial symmetry axis $\Delta$. Then, for any $a \in S$ we can find Brownian motions $Z_t$ starting at $a$ and $\tilde Z_t$ starting at $\sigma_\D(a)$ defined on the same probability space such that $\tau_U \leq \tilde \tau_U$ a.s. (where $\tilde \tau_U$ is the exit time from $U$ of $\tilde Z$). Furthermore, if $\sigma_{\Delta}(S)$ is strictly contained in $U \setminus (S \cup \Delta)$ then $P(\tau_U < \tilde \tau_U) > 0$. In particular, $E_a[\tau_U^p] \leq E_{\si_\D(a)}[\tilde \tau_U^p]$ (with strict inequality if $\sigma_{\Delta}(S)$ is strictly contained in $U \setminus (S \cup \Delta)$).
\end{thm}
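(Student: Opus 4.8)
The plan is to build the coupling by reflecting Brownian paths across the partial symmetry axis $\Delta$ until they meet, and then letting them coalesce. Concretely, start $Z_t$ at $a \in S$ and define $\tilde Z_t := \si_\D(Z_t)$ for $t$ less than the first hitting time $\eta$ of the line $\Delta$ by $Z_t$; since $\si_\D$ is an isometry, $\tilde Z_t$ is a Brownian motion started at $\si_\D(a)$. For $t \geq \eta$, set $\tilde Z_t := Z_t$, i.e. the two processes move together once they have met on $\Delta$. This is the standard mirror coupling, and it is immediate that $\tilde Z_t$ is a Brownian motion started at $\si_\D(a)$ on the same probability space as $Z_t$.

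The key geometric observation is that up to time $\eta$ the path $Z_t$ lives in $\overline{S}$ (it starts in $S$ and cannot leave $S$ without crossing $\Delta$, which is time $\eta$ by definition), hence the reflected path $\tilde Z_t$ lives in $\si_\D(\overline{S}) \subseteq \overline{U}$, using the defining property of the symmetric side. So before time $\eta$, neither process has exited $U$: $\tilde Z$ stays in $U$ because $\si_\D(S) \subseteq U$, and $Z$ stays in $U$ because $S \subseteq U$. Actually I only need that $\tilde Z$ has not exited $U$ before time $\eta$ while tracking when $Z$ exits; the point is that the two exit times can only differ due to behavior before coalescence. After time $\eta$ the processes are identical, so they exit $U$ simultaneously. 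Therefore the only way $Z$ can exit $U$ before $\eta$ is through $\partial U \cap \overline{S}$, and at any such time $\tilde Z$ is still in $\si_\D(\overline S) \subseteq \overline U$, so $\tilde\tau_U \geq \tau_U$; and if $Z$ does not exit before $\eta$ then after $\eta$ the two exit together, again giving $\tilde \tau_U = \tau_U$. In all cases $\tau_U \leq \tilde\tau_U$ a.s., and then $E_a[\tau_U^p] = E[\tau_U^p] \leq E[\tilde\tau_U^p] = E_{\si_\D(a)}[\tilde\tau_U^p]$ since $t \mapsto t^p$ is increasing.

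For the strict inequality, I would argue that when $\si_\D(S)$ is strictly contained in $U \setminus (S \cup \Delta)$ there is positive probability of the event $\{\tau_U < \tilde\tau_U\}$. The idea: with positive probability $Z_t$ exits $U$ through a portion of $\partial U \cap \overline S$ before hitting $\Delta$, while $\tilde Z_t = \si_\D(Z_t)$ at that moment lies in the \emph{interior} of $U$ (because $\si_\D(\overline S)$ is a positive distance from $\partial U$ away from $\Delta$, by the strict containment hypothesis), so $\tilde Z$ has strictly more time to run. One should exhibit such a path explicitly — e.g. condition on $Z$ making a short excursion from $a$ to a boundary point of $U$ in $S$ that is bounded away from $\Delta$, an event of positive probability by standard support/hitting estimates for Brownian motion — and observe that on this event $\tilde Z$ is strictly inside $U$, hence $\tilde\tau_U > \tau_U$. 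Combined with $\tau_U \leq \tilde\tau_U$ a.s., a positive-probability strict gap yields $E_a[\tau_U^p] < E_{\si_\D(a)}[\tilde\tau_U^p]$.

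The main obstacle is the careful handling of the boundary: one must make sure that "$Z_t$ exits $U$ before $\eta$" is the only source of discrepancy and that on that event $\tilde Z$ has genuinely not yet exited — this requires knowing $\si_\D(\overline S) \subseteq \overline U$ (for the weak inequality) and that the relevant exit points of $Z$ actually occur on $\partial U \cap \overline S$ rather than on $\Delta$, which is exactly how $\eta$ was defined. For the strict part the subtlety is producing an event of positive probability on which the gap is strictly positive; this is where the strict-containment hypothesis is essential, since it guarantees a uniform buffer between $\si_\D(\overline S)$ and $\partial U$, ruling out the degenerate situation where $\tilde Z$ exits at the same instant as $Z$.
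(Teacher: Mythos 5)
Your coupling is exactly the one used in the paper: reflect $Z_t$ across $\Delta$ until its hitting time of $\Delta$ and let the two processes coalesce afterwards, then observe that before coalescence $Z$ stays on the $S$-side, so whenever $Z$ is still in $U$ the mirror point $\si_\D(Z_t)\in\si_\D(S)\subseteq U$, giving $\tau_U\le\tilde\tau_U$ a.s.\ and hence the comparison of $p$-th moments. That part of your argument is correct and essentially identical to the paper's proof.

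The gap is in your justification of the strict statement. You assert that strict containment of $\si_\D(S)$ in $U\setminus(S\cup\Delta)$ gives ``a uniform buffer'': that $\si_\D(\overline S)$ lies at positive distance from $\partial U$ away from $\Delta$. That is not what the hypothesis says and it is false in general: strict containment only means the inclusion $\si_\D(S)\subseteq U\setminus(S\cup\Delta)$ is proper, and typically $\si_\D(\overline S)$ still meets $\partial U$ along large mirrored boundary arcs (think of a disc with an extra lobe attached below the axis: the reflection of the upper half-disc is the lower half-disc, most of whose boundary circle is still boundary of $U$). So you cannot conclude that \emph{every} exit of $Z$ through $\partial U\cap\overline S$ before hitting $\Delta$ leaves $\tilde Z$ strictly inside $U$. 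What the hypothesis does give, after a short argument using connectedness of $U$, is the existence of \emph{some} boundary point $z^*\in\partial U$ strictly on the $S$-side of $\Delta$, accessible from $a$ by a path in $U$ that avoids $\Delta$, with $\si_\D(z^*)\in U$; one then applies the support theorem for Brownian motion (this is the step the paper handles by citing \cite[Thm.\ I.6.6]{bass}) to get positive probability that $Z$ exits near $z^*$ before touching $\Delta$, on which event $\tilde Z$ is still in the interior of $U$ and so $\tau_U<\tilde\tau_U$. Your write-up needs this localized version of the argument (one good boundary point plus the support theorem) in place of the nonexistent uniform distance between $\si_\D(\overline S)$ and $\partial U$; with that replacement, your proof coincides with the paper's.
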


\begin{proof}
This follows from a coupling argument. Let $Z_{t}$ start at $a\in S$, and let $H_{\Delta}$ be its
hitting time of the line $\Delta$. Form the process $\tilde Z_t$ by the rule

$$ \tilde Z_t = \left \{ \begin{array}{ll}
\si_\D(Z_t) & \qquad  \mbox{if } t < H_\D  \\
Z_t & \qquad \mbox{if } t \geq H_\D\;.
\end{array} \right. $$

It follows from the Strong Markov property and the reflection invariance of Brownian motion that $\tilde Z_t$ is a Brownian motion. Clearly $\tau_U = \tilde \tau_U$ on the set $\{\tau_U \geq H_\D\}$, and our conditions on $S$ imply $\tau_U \leq \tilde \tau_U$ on the set $\{\tau_U < H_\D\}$. Furthermore, if $\sigma_{\Delta}(S)$ is strictly contained in $U \setminus (S \cup \Delta)$, then $Z_t$ has some positive probability of leaving $U$ before $\tilde Z_t$ does; this is implied for instance by \cite[Thm. I.6.6 ]{bass}. The result follows.
\end{proof}

This theorem allows us in essence to exclude the symmetric side of any partial symmetry axis for $U$ in our search for $p$-th centers. The only exception to this rule is when $\sigma_{\Delta}(S) = U \setminus (S \cup \Delta)$, i.e. when $\D$ is a symmetry axis of $U$; however in most cases a symmetry axis will contain all $p$-th centers. To see why this is so, we need another definition.

\begin{defn}
Let $\Delta$ be a symmetry axis of $U$. We
say that $U$ is $\Delta$-convex if
\[
\forall\left(z,t\right)\in U\times[0,1]\,\,\,tz+(1-t)\sigma_{\Delta}(z)\in U.
\]
In other words, for every $z\in U$, the segment joining $z$ and
$\sigma_{\Delta}(z)$ remains inside $U$.
\end{defn}

It is clear that any convex $U$ is $\Delta$-convex for
any symmetry axis $\Delta$, and a less trivial example can be given by $\{-f(x) < y < f(x)\}$, where $f$ is a positive continuous function on the real line, which is $\D$-convex with $\D = \RR$. A domain which is not $\D$-convex with respect to a symmetry axis can be given by $W_\ep = \{-\ep<y<\ep, |x|<1\} \cup \{|z-1|< \frac{1}{2}\} \cup \{|z+1|< \frac{1}{2}\}$ with $\ep < \frac{1}{2}$; this has the real and imaginary axes as symmetry axes but is not $\D$-convex with respect to the imaginary axis (though it is with respect to the real line). As will be seen below, this domain also shows why $\D$-convexity is required in the following proposition.

\begin{prop}
\label{prop:full.sym.axis} Suppose $U$ is $\D$-convex with respect to a symmetry axis $\D$. Then all $p$-th centers of $U$ lie on $\D$.
\end{prop}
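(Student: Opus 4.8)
The plan is to deduce the proposition from a single application of Theorem~\ref{prop:part.sym.axis}, using a reflection line parallel to $\Delta$ that sends a given point straight onto $\Delta$. First I would normalize: applying a rigid motion of the plane (which affects neither the Brownian motion nor $\tau_U$) I may assume $\Delta = \RR$, so that $U$ is symmetric under $z \mapsto \bar z$ and the $\Delta$-convexity hypothesis reads: whenever $x + iy \in U$, also $x + is \in U$ for every $s$ with $|s| \leq |y|$. It then suffices to show that no point $a = x_0 + i y_0$ with $y_0 \neq 0$ is a $p$-th center; and since $E_a[\tau_U^p] = E_{\bar a}[\tau_U^p]$ by the symmetry of $U$, I may further assume $y_0 > 0$.

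Next I would introduce the horizontal line $\Delta' = \{y = y_0/2\}$ and check that it is a partial symmetry axis of $U$ with $a$ on a symmetric side $S$. Indeed, if $x + iy \in U$ with $y > y_0/2$ then $|y_0 - y| \leq y$ (this holds both for $y \geq y_0$ and for $y_0/2 < y < y_0$), so $\Delta$-convexity applied to the point $x+iy$ gives $\sigma_{\Delta'}(x+iy) = x + i(y_0 - y) \in U$; hence $\sigma_{\Delta'}(U \cap \{y > y_0/2\}) \subseteq U$, which is precisely the folding property, with $S \supseteq U \cap \{y > y_0/2\} \ni a$. The point of the construction is that $\sigma_{\Delta'}(a) = x_0$ lies on $\Delta$. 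Theorem~\ref{prop:part.sym.axis} then gives $E_a[\tau_U^p] \leq E_{x_0}[\tau_U^p]$, and this inequality is strict whenever $\sigma_{\Delta'}(S)$ is strictly contained in $U \setminus (S \cup \Delta')$. In that case we have produced a point of $\Delta$ at which $E[\tau_U^p]$ strictly exceeds its value at $a$, so $a$ is not a $p$-th center, and the proposition follows.

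The step I expect to require the most care is the strictness, equivalently the exclusion of the degenerate possibility $\sigma_{\Delta'}(S) = U \setminus (S \cup \Delta')$, i.e.\ that $\Delta'$ is itself a full symmetry axis of $U$. When that happens, composing the reflections in $\Delta$ and $\Delta'$ shows $U$ is invariant under the translation $z \mapsto z + i y_0$, so $E_a[\tau_U^p] = E_{a - i y_0}[\tau_U^p] = E_{x_0}[\tau_U^p]$ and $\Delta$ still contains a $p$-th center; ruling out that $a$ itself is one then needs the mild additional input that $U$ admits no such translational symmetry --- automatic, for instance, when $U$ is bounded, which covers all the examples treated below (the infinite strip being the cautionary case). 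The only remaining item is the positive-probability assertion behind the strict inequality --- that the Brownian motion from $a$ exits $U$ strictly before its reflected partner with positive probability --- but this is already packaged into Theorem~\ref{prop:part.sym.axis} via \cite[Thm. I.6.6]{bass}, so no separate argument is required.
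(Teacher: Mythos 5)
Your argument is essentially the paper's: you reflect in the line $\Delta'$ parallel to $\Delta$ at half the height of $a$, so that $\sigma_{\Delta'}(a)$ is the orthogonal projection of $a$ onto $\Delta$, and then apply Theorem \ref{prop:part.sym.axis}. Where you differ is the strictness step, and there you are more careful than the paper: the paper simply asserts that $\sigma_{\Delta'}(S)$ is strictly contained in $U\setminus(S\cup\Delta')$, while you correctly note that equality can occur, namely exactly when $\Delta'$ is a full symmetry axis, in which case $U$ is invariant under $z\mapsto z+iy_{0}$ and (by $\Delta$-convexity plus connectedness) must be a strip, a half-plane, or all of $\CC$, oriented perpendicular to $\Delta$. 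The infinite strip really is a counterexample to the literal statement (its $p$-th centers fill the mid-line, which is perpendicular to $\Delta$, and its exit time has all moments finite), so the mild extra hypothesis you impose --- no translational symmetry perpendicular to $\Delta$, automatic for bounded $U$ --- is genuinely needed, and in that degenerate case your weaker conclusion, that $\Delta$ still contains a $p$-th center, is the best available. In short: correct, same route as the paper, with a legitimate repair of an edge case that the paper's proof glosses over.
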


\begin{proof}
Let $a \in U \setminus \D$, and let $\hat a = \frac{1}{2} a + \frac{1}{2} \si_\D(a)$ be the orthogonal projection of $a$ onto $\D$. Let $L$ be the line parallel to $\D$ which passes through the point $\frac{1}{2} a + \frac{1}{2} \hat a$; speaking informally, this is the line halfway between $a$ and $\D$. $\D$-convexity implies that $L$ is a partial symmetry axis of $U$, and if $S$ is the component of $U \setminus L$ containing $a$ then $\si_L(S)$ is strictly contained in $U \setminus (S \cup L)$. It therefore follows from Theorem \ref{prop:part.sym.axis} that $E_a[\tau_U^p] < E_{\hat a}[\tau_U^p]$. The result follows.
\end{proof}

Note that this proposition completely solves our problem in the case that our domain is $\D$-convex with respect to two or more non-parallel symmetry axes, since all $p$-th centers must lie at their point of intersection, and we have also incidentally proved the purely geometrical fact that all such symmetry axes must coincide at a unique point; more on this in the final section. Thus, for instance, all $p$-centers of any regular polygon, a circle, an ellipse, a rhombus, and any number of other easily constructed examples must lie at their natural centers. To see an example of a domain with intersecting symmetry axes but where the point of intersection is not a $p$-th center, let us return to the domains $W_\ep$ described immediately before this proposition. Proposition \ref{prop:full.sym.axis} implies that all $p$-th centers lie on the real line, but it is easy to see that if we make $\ep$ sufficiently small then 0, the intersection point of the two symmetry axes, will not be a $p$-th center (clearly $\tau_{W_\ep}(1) \geq \tau_{\{|z-1|<1/2\}}(1)$, so that $E_1[\tau_{W_\ep}^p]$ remains always greater than a positive constant, but $\tau_{W_\ep}(0)$ decreases monotonically to 0 a.s. as $\ep \searrow 0$, so that $E_1[\tau_{W_\ep}^p] \searrow 0$).

\vski

Let us now look at an example that shows the use of the results proved to this point, but also their limitations. Suppose $U$ is the isosceles right triangle with vertices at $-1,1,$ and $i$. The imaginary axis is an axis of symmetry, and $U$ is $\D$-convex with respect to this axis, so all $p$-th centers must lie on the imaginary axis. The line $\{y=\frac{1}{2}\}$ is a partial symmetry axis for $U$, with $U \cap \{y > \frac{1}{2}\}$ the symmetric side, so all $p$-centers must lie on $\{x=0, y \leq \frac{1}{2}\}$. Now, common sense tells us that the $p$-th centers can't be too close to the real axis as well, because this is a boundary component, but there is no good partial symmetry axis to apply to conclude that rigorously. The way out of this difficulty is to extend our method of reflection to curves more general than straight lines. For this, we will need to utilize the conformal invariance of Brownian motion, via the following famous theorem of L\'evy (see \cite{bass} or \cite{mortper}).

\begin{thm}
\label{Levy}If $f$ is a holomorphic function then $f(Z_{t})$ is a time
changed Brownian motion. More precisely, $f(Z_{\kappa^{-1}(t)})$
is a Brownian motion where
\[
\kappa(t):=\int_{0}^{t}|f'(Z_{s})|^{2}ds
\]
for $t\geq0$.
\end{thm}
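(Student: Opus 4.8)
The plan is to obtain L\'evy's theorem from It\^o's formula together with L\'evy's characterization of Brownian motion. Write $f=u+iv$ with $u,v$ real-valued harmonic functions satisfying the Cauchy--Riemann equations $u_x=v_y$, $u_y=-v_x$ on the domain where $f$ is defined. Applying It\^o's formula to the two real semimartingales $u(X_t,Y_t)$ and $v(X_t,Y_t)$, the second-order terms assemble into $\frac12\Delta u\,dt$ and $\frac12\Delta v\,dt$, both of which vanish since $u$ and $v$ are harmonic; hence
\[ du(Z_t)=u_x(Z_t)\,dX_t+u_y(Z_t)\,dY_t,\qquad dv(Z_t)=v_x(Z_t)\,dX_t+v_y(Z_t)\,dY_t, \]
so each coordinate process is a continuous local martingale (after localizing along an exhaustion of the domain by stopping times, in case $f$ is not entire).

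Next I would compute the bracket processes. Since $X$ and $Y$ are independent standard one-dimensional Brownian motions, and using the Cauchy--Riemann equations,
\[ d\langle u(Z)\rangle_t=\bigl(u_x^2+u_y^2\bigr)(Z_t)\,dt=|f'(Z_t)|^2\,dt=d\langle v(Z)\rangle_t, \]
whereas the cross bracket is
\[ d\langle u(Z),v(Z)\rangle_t=\bigl(u_xv_x+u_yv_y\bigr)(Z_t)\,dt=0, \]
again by Cauchy--Riemann. Therefore $M_t:=f(Z_t)-f(Z_0)$ is a continuous $\CC$-valued local martingale whose real and imaginary parts each have quadratic variation $\kappa(t)=\int_0^t|f'(Z_s)|^2\,ds$ and whose mutual bracket vanishes.

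Finally I would perform the time change. Put $\kappa^{-1}(t):=\inf\{s\ge0:\kappa(s)>t\}$. Applying the Dambis--Dubins--Schwarz theorem coordinatewise (see \cite{bass}), the process $W_t:=M_{\kappa^{-1}(t)}$ is a continuous local martingale with $\langle\mathrm{Re}\,W\rangle_t=\langle\mathrm{Im}\,W\rangle_t=t$ and $\langle\mathrm{Re}\,W,\mathrm{Im}\,W\rangle_t=0$; by L\'evy's characterization of Brownian motion this forces $W$ to be a standard planar Brownian motion. Consequently $f(Z_{\kappa^{-1}(t)})=f(Z_0)+W_t$ is a planar Brownian motion started at $f(Z_0)$, which is the assertion.

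The step I expect to need the most care is the \emph{joint} conclusion, as opposed to the two separate one-dimensional statements: a priori the two time-changed coordinate martingales could be dependent, and it is exactly the vanishing of the cross bracket --- which is precisely the content of the Cauchy--Riemann equations --- that rules this out through L\'evy's characterization. A secondary technical point arises when $f$ is holomorphic only on a proper subdomain $U$: the clock $\kappa$ need not increase to $\infty$, so one runs the argument only up to $\tau_U$ (which is all that will be needed below), and if a Brownian motion indexed by all of $[0,\infty)$ is wanted one enlarges the probability space to append an independent Brownian motion once the clock is exhausted. One also notes that, since singletons are polar for planar Brownian motion, the path a.s.\ avoids the isolated zeros of $f'$, so $\kappa$ is a.s.\ strictly increasing and $\kappa^{-1}$ is a genuine inverse off a null set.
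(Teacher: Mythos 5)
Your argument is correct: Itô's formula with the Cauchy--Riemann equations gives a two-dimensional continuous local martingale whose coordinate brackets both equal $\kappa(t)$ and whose cross bracket vanishes, and the Dambis--Dubins--Schwarz time change combined with L\'evy's characterization (applied jointly, as you rightly emphasize, not coordinatewise) yields the planar Brownian motion; your caveats about a finite clock on a proper subdomain and the polarity of the zeros of $f'$ are the right ones. The paper itself offers no proof of this classical theorem, simply citing \cite{bass} and \cite{mortper}, and your write-up is essentially the standard argument found in those references.
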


This allows us to extend Theorem \ref{prop:part.sym.axis}, as follows.

\begin{prop}
\label{prop:part.sym.axis.conformal} Suppose $U$ is a domain with an axis of symmetry $\Delta$, and suppose $f$ is a conformal map defined on $U$ with the property that $|f'(z)| \geq |f'(\si_\Delta(z))|$ for all $z \in A$, where $A$ is one component of $U \backslash \Delta$ and $\si_\Delta$ is the symmetry over $\Delta$. Then, for any $a \in A$, we can find Brownian motions $Z_t$ starting at $f(a)$ and $\tilde Z_t$ starting at $f(\si_\Delta(a))$ defined on the same probability space such that $\tau_{f(U)}(f(a)) \geq \tilde \tau_{f(U)}(f(\si_\Delta(a)))$ a.s. (where $\tilde \tau_U$ is the exit time from $U$ of $\tilde Z$). In particular, $E_{f(a)}[\tau_{f(U)}^p] \geq E_{f(\si_\Delta( a))}[\tau_{f(U)}^p]$. If there is any point in $A$ at which $|f'(z)| > |f'(\si_\Delta(z))|$ then $E_{f(a)}[\tau_{f(U)}^p] > E_{f(\si_\Delta(a))}[\tau_{f(U)}^p]$ (for this statement we recall the assumption that $E_{w}[\tau_{f(U)}^p]< \ff$ for any $w \in f(U)$).
\end{prop}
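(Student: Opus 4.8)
The plan is to transfer the coupling argument of Theorem \ref{prop:part.sym.axis} from the domain $U$ to the image domain $f(U)$ using L\'evy's theorem (Theorem \ref{Levy}), with the key point being that the time-change induced by $f$ only \emph{slows down} the reflected process relative to the original one. First I would set up the coupling upstairs: given $a \in A$, let $Z_t$ be a Brownian motion started at $a$, let $H_\Delta$ be its hitting time of $\Delta$, and form the reflected Brownian motion $W_t$ by setting $W_t = \si_\Delta(Z_t)$ for $t < H_\Delta$ and $W_t = Z_t$ for $t \geq H_\Delta$, exactly as in the proof of Theorem \ref{prop:part.sym.axis}. By the strong Markov property and reflection invariance, $W_t$ is a Brownian motion started at $\si_\Delta(a)$, and $W_t$ and $Z_t$ coincide after $H_\Delta$.

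Next I would push both processes forward by $f$. By L\'evy's theorem, $f(Z_t)$ and $f(W_t)$ are time-changed Brownian motions, with clocks
\[
\kappa_Z(t) = \int_0^t |f'(Z_s)|^2\, ds, \qquad \kappa_W(t) = \int_0^t |f'(W_s)|^2\, ds.
\]
Reparametrizing, $\tilde Z_t := f(Z_{\kappa_Z^{-1}(t)})$ is a Brownian motion started at $f(a)$ and, analogously, a Brownian motion $\hat Z_t := f(W_{\kappa_W^{-1}(t)})$ started at $f(\si_\Delta(a))$. The exit time of $\tilde Z$ from $f(U)$ is $\tau_{f(U)}(f(a)) = \kappa_Z(\tau_U(a))$, and similarly the exit time of $\hat Z$ is $\kappa_W(\tau_U(\si_\Delta(a)))$, since $f$ is a conformal bijection of $U$ onto $f(U)$ and hence maps the boundary-hitting event faithfully. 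So the entire claim reduces to the inequality $\kappa_Z(\tau_U(a)) \geq \kappa_W(\tau_U(\si_\Delta(a)))$ a.s.

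To prove that inequality, observe that on the event $\{\tau_U(a) < H_\Delta\}$, for every $s < \tau_U(a) \leq H_\Delta$ we have $Z_s \in A$ and $W_s = \si_\Delta(Z_s)$, so the hypothesis $|f'(z)| \geq |f'(\si_\Delta(z))|$ on $A$ gives $|f'(Z_s)|^2 \geq |f'(W_s)|^2$ pointwise, hence $\kappa_Z(\tau_U(a)) \geq \kappa_W(\tau_U(a))$; moreover from Theorem \ref{prop:part.sym.axis} (with $U$ itself as the symmetric side configuration coming from the symmetry axis $\Delta$) we know $W$ exits $U$ no later than $Z$ on this event, wait --- more precisely the reflected process $W$ starting at $\si_\Delta(a)$ satisfies $\tau_U(\si_\Delta(a)) \leq \tau_U(a)$ on $\{\tau_U(a) < H_\Delta\}$ by the same folding argument (here both components play symmetric roles since $\Delta$ is a genuine symmetry axis), so $\kappa_W(\tau_U(\si_\Delta(a))) \leq \kappa_W(\tau_U(a)) \leq \kappa_Z(\tau_U(a))$. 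On the complementary event $\{\tau_U(a) \geq H_\Delta\}$, the processes $Z$ and $W$ agree from $H_\Delta$ onward, so they exit $U$ at the same moment $\tau_U(a) = \tau_U(\si_\Delta(a))$; splitting each clock integral at $H_\Delta$ and using $|f'(Z_s)|^2 \geq |f'(W_s)|^2$ on $[0,H_\Delta)$ (where $W_s \in A$-side image) while the two integrands coincide on $[H_\Delta, \tau_U(a))$, we again get $\kappa_Z(\tau_U(a)) \geq \kappa_W(\tau_U(a)) = \kappa_W(\tau_U(\si_\Delta(a)))$. Combining the two cases yields $\tau_{f(U)}(f(a)) \geq \tilde\tau_{f(U)}(f(\si_\Delta(a)))$ a.s., and raising to the $p$-th power and taking expectations gives the moment inequality.

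For the strict inequality, if $|f'(z_0)| > |f'(\si_\Delta(z_0))|$ at some $z_0 \in A$, then by continuity this holds on a neighborhood, and $Z$ spends time in that neighborhood before $H_\Delta$ with positive probability (and before exiting $U$ with positive probability), forcing $\kappa_Z(\tau_U(a)) > \kappa_W(\tau_U(\si_\Delta(a)))$ on a positive-probability event; since $E_w[\tau_{f(U)}^p] < \ff$ everywhere, strict inequality of the random variables on a positive-probability set and a.s. inequality elsewhere upgrades to strict inequality of the $p$-th moments. The main obstacle I anticipate is bookkeeping with the time changes: one must be careful that $\kappa_Z$ and $\kappa_W$ are evaluated at the correct (different) exit times $\tau_U(a)$ and $\tau_U(\si_\Delta(a))$, and that the comparison $\tau_U(\si_\Delta(a)) \leq \tau_U(a)$ on $\{\tau_U(a)<H_\Delta\}$ is justified cleanly --- this is where the symmetry-axis hypothesis (as opposed to a mere partial symmetry axis) is essential, since we need the reflected path to stay in $U$, and it is exactly the content of the coupling in Theorem \ref{prop:part.sym.axis} applied on the component $A$.
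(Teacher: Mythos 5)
Your proposal is correct and follows essentially the same route as the paper's proof: the reflection coupling of Theorem \ref{prop:part.sym.axis} pushed forward by $f$ via L\'evy's theorem (Theorem \ref{Levy}), with the exit times in $f(U)$ identified as the clocks $\kappa$ evaluated at the exit time upstairs and compared through the pointwise inequality $|f'(Z_s)| \geq |f'(\sigma_\Delta(Z_s))|$ before $H_\Delta$ and equality afterwards. Your explicit case split at $H_\Delta$ and the continuity/positive-probability argument for the strict inequality are just elaborations of steps the paper leaves implicit.
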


\begin{proof}
Let $Z_{t}$ be a Brownian motion starting at $a$, and let $\tilde Z_{t}$ be defined as in Theorem \ref{prop:part.sym.axis}.
According to Theorem \ref{Levy}, the processes $f(Z_t)$ and $f(\tilde Z_{t})$ are time-changed Brownian motions, and the time changes are given by $\kappa^{-1}(s)$ and $\tilde \kappa^{-1}(s)$, respectively, where

$$\left.\begin{aligned}\kappa(t) & =\int_{0}^{t}\mid f'(Z_{t})\mid^{2}dt\\
\tilde \kappa(t) & =\int_{0}^{t}\mid f'(\tilde Z_{t})\mid^{2}dt
\end{aligned}
\;\right\} \; \;t<\tau_{U}.$$


Now, our assumptions imply $|f'(Z_t)| \geq |f'(\tilde Z_{t})|$ a.s. for all $t < \tau_U$, and thus $\kappa(t) \geq \tilde \kappa(t)$ a.s. for all $t < \tau_U$. It follows from this that $\tau \geq \tilde \tau$ a.s., where $\tau$ and $\tilde \tau$ are the exit times from $f(U)$ of the Brownian motions $f(Z_{\kappa^{-1}(s)})$ and $f(\tilde Z_{\tilde \kappa^{-1}(s)})$ which begin at $f(a)$ and $f(\si_\Delta(a))$, respectively. The result follows.
\end{proof}

We can obtain a corollary which will be useful for the isosceles triangle and in other cases by taking $f(z) = z_0 + R\Big(\frac{z+i}{z-i}\Big)$ for $z_0 \in \CC$ and $R>0$, which takes the real axis to the circle $C = \{|z-z_0|=R\}$, the upper half-plane to the outside of $C$, and the lower half-plane to the inside. We have $|f'(z)| = \frac{2R}{|z-i|^2}$, and it is easy to check that $|f'(z)| > |f'(\bar z)|$ for all $z$ in the upper half-plane. Applying Proposition \ref{prop:part.sym.axis.conformal} and working through the implications yields the following.

\begin{corollary} \label{circreflect}
Let $C = \{|z-z_0|=R\}$ be a circle in $\CC$, with inside $\II$ and outside $\OO$. If $U$ is a domain such that $\si_C(U \cap \II) \subseteq (U \cap \OO)$, then no $p$-th center of $U$ lies in $\II$.
\end{corollary}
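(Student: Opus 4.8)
The plan is to derive Corollary~\ref{circreflect} as a direct consequence of Proposition~\ref{prop:part.sym.axis.conformal} by exhibiting the right ambient domain, symmetry axis, and conformal map. First I would set up the picture on the half-plane side: let $f(z) = z_0 + R\bigl(\frac{z+i}{z-i}\bigr)$, which by the discussion preceding the corollary is a conformal map sending the real axis to $C = \{|z-z_0|=R\}$, the lower half-plane $\HH^- = \{\operatorname{Im} z < 0\}$ onto the inside $\II$ of $C$, and the upper half-plane $\HH^+$ onto the outside $\OO$. Set $V = f^{-1}(U)$, so that $f$ maps $V$ conformally onto $U$ and $f(V) = U$. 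The real axis $\D = \RR$ is an axis of symmetry of the plane and $\si_\D$ is ordinary complex conjugation; I would take $A = \HH^+ \cap V$ as the chosen component of $V \setminus \D$.

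Next I would verify the two hypotheses of Proposition~\ref{prop:part.sym.axis.conformal}. The derivative satisfies $|f'(z)| = \frac{2R}{|z-i|^2}$, and since $|z-i| < |\bar z - i|$ precisely when $z \in \HH^+$, we get $|f'(z)| > |f'(\bar z)|$ for every $z \in \HH^+$, in particular on all of $A$; this gives the strict form of the derivative inequality. It remains to check that $\si_C$ on $U$ corresponds under $f$ to conjugation on $V$, i.e. that $f \circ \si_\RR = \si_C \circ f$; this is the familiar fact that inversion in the circle $C$ is the conjugate of reflection in $\RR$ under the Möbius map carrying $\RR$ to $C$, and it is a short direct computation. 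Consequently the hypothesis $\si_C(U \cap \II) \subseteq U \cap \OO$ translates exactly into $\si_\RR(V \cap \HH^-) \subseteq V \cap \HH^+$, equivalently $\si_\RR(A) = \si_\RR(V \cap \HH^+)$ being contained in $V \cap \HH^-$ — wait, more carefully: writing $B = V \cap \HH^-$, the inclusion says $\si_\RR(B) \subseteq A$; since conjugation is an involution this is the same as $\si_\RR(A) \subseteq B \subseteq V$, so $V$ is closed under reflecting $A$ across $\RR$. This is slightly weaker than $V$ being a symmetric domain, so to apply Proposition~\ref{prop:part.sym.axis.conformal} verbatim I would instead work with the symmetrized subdomain, or simply observe that the coupling in the proof of that proposition only ever uses $Z_t$ for $t < \tau_V$, during which reflected paths started in $A$ stay in $\si_\RR(A) \subseteq V$, so the argument goes through for this one-sided situation as well. (Alternatively one notes $V$ itself need not be connected or symmetric, and the cleanest route is to apply the coupling directly rather than quoting the proposition as a black box.)

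Finally I would assemble the conclusion. Take any point $w \in U \cap \II$ and write $w = f(a)$ with $a \in \HH^-$; then $\si_C(w) = f(\bar a)$ with $\bar a \in \HH^+ = $ the side on which $|f'|$ dominates. Applying the (one-sided) coupling with the roles arranged so that the path started at the point where $|f'|$ is larger exits later, and using that there is a point of $A$ with strict inequality, we get $E_{\si_C(w)}[\tau_U^p] > E_w[\tau_U^p]$. Since $\si_C(w) \in U \cap \OO$, every point of $\II$ is beaten by some point of $\OO$, so no $p$-th center can lie in $\II$. The main obstacle, as flagged above, is the bookkeeping in matching the one-sided hypothesis $\si_C(U\cap\II)\subseteq U\cap\OO$ to the symmetric-domain setup of Proposition~\ref{prop:part.sym.axis.conformal}: one must be careful that the reflected Brownian path, which only needs to make sense up to the exit time, never leaves $U$ on the relevant event, and that the direction of the inequality ($\tau \ge \tilde\tau$ for the path started at the larger-$|f'|$ point) lines up with the claim that interior points are suboptimal.
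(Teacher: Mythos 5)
Your route is the same as the paper's: conjugate by the M\"obius map $f(z)=z_0+R\frac{z+i}{z-i}$, use $|f'(z)|=\frac{2R}{|z-i|^2}$ to get the strict derivative comparison across $\RR$, and feed this into the reflection--time-change argument of Proposition \ref{prop:part.sym.axis.conformal} (the paper itself only says ``working through the implications,'' so your extra care about the one-sided hypothesis is exactly the right thing to supply). One bookkeeping slip should be fixed, because it sits at precisely the delicate point you identified. With $V=f^{-1}(U)$, $A=V\cap\HH^+$ and $B=V\cap\HH^-$, the hypothesis $\si_C(U\cap\II)\subseteq U\cap\OO$ translates to $\si_\RR(B)\subseteq A$; this is \emph{not} equivalent to $\si_\RR(A)\subseteq B$ (reflecting both sides gives $B\subseteq\si_\RR(A)$), and the assertion that ``reflected paths started in $A$ stay in $\si_\RR(A)\subseteq V$'' is false in general -- the reflection of the upper part $A$ may well leave $V$. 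What the one-sided coupling needs, and what your final paragraph in fact uses, is the containment $\si_\RR(B)\subseteq V$: start the primary motion $Z$ at $a\in B$ (preimage of the inside point $w$), reflect it until it hits $\RR$; while $Z$ stays in $B$ its mirror stays in $\si_\RR(B)\subseteq V$, so $\tilde\tau_V\ge\tau_V$, and since the mirror lives in $\HH^+$ where $|f'|$ is larger, $\tilde\kappa\ge\kappa$, whence $\tilde\kappa(\tilde\tau_V)\ge\kappa(\tau_V)$ and $E_{\si_C(w)}[\tau_U^p]\ge E_w[\tau_U^p]$, strictly off the axis under the standing finiteness assumption. (Had you run the coupling from the $A$-side, as the misstated containment suggests, the mirror path could exit $V$ early and no a.s.\ comparison of exit times would follow.) Apart from this, the only technicalities left implicit are the ones the paper also waves away: the pole of $f$ at $i$ (equivalently the possible omission of the single point $f(\infty)=z_0+R$ from $f(V)$), which is harmless since points are polar for planar Brownian motion.
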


Note that here $\si_C$ denotes reflection over the circular arc $C$, that is, $\si_C(z) = z_0 + \frac{R^2}{\seg{18}{z-z_0}}$. We remark further that the singularity that $f$ has at $i$ does not cause a problem in this result, because $P_0(Z_t = i \mbox{ for some } t \geq 0) = 0$, so a.s. a Brownian motion starting at $0$ will not hit the singularity, anyway. The compact set $\{B_t:0\leq t \leq \tau_U\}$ is therefore bounded away from $i$ a.s., and the result goes through.

Let us now apply this corollary to the isosceles right triangle $U$ above. If $C$ is the circle passing through $-1$ and $1$ which intersects the real axis at angles of $\frac{\pi}{8}$, then the reflection of the set $\AA=\II \cap U$ will be the region $\BB$ in the upper half-plane bounded by $C$ and the circle which passes through $-1$ and $1$ and intersects the real axis at angles of $\frac{\pi}{4}$; this can be seen by noting that the transformation $\si_C$ preserves angles and and also preserves the class of circles on the Riemann sphere (which includes lines, interpreted as circles through $\infty$).

\begin{figure}[H]
\centering{}\includegraphics[width=9cm,height=9cm,keepaspectratio]{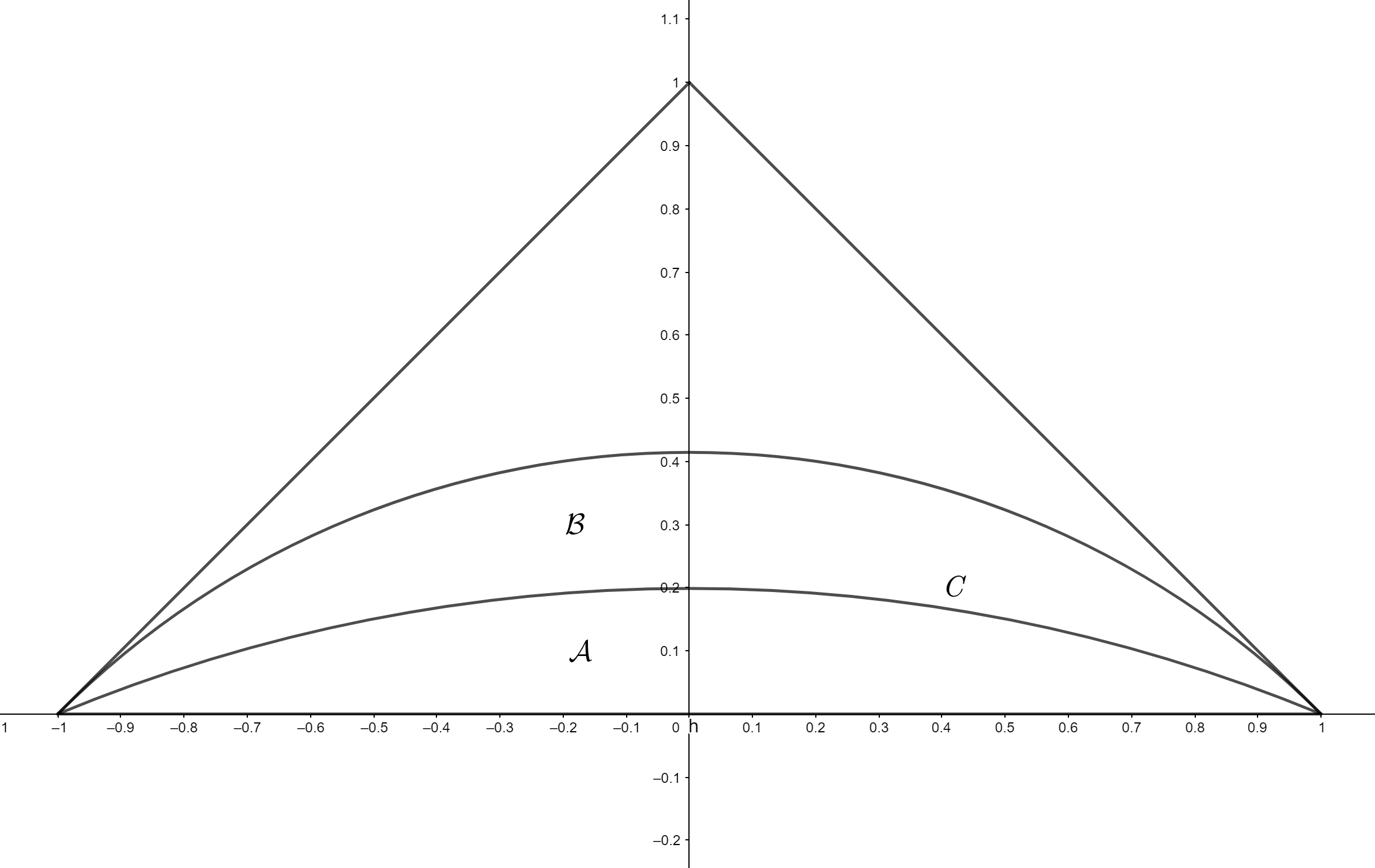}\caption{Reflection over a circle} \label{trireflect}
\end{figure}

As this region lies within $U$, we conclude that no $p$-th centers lie within $\AA$. A bit of Euclidean geometry shows that $C$ intersects the imaginary axis at $\csc(\frac{\pi}{8}) - \cot(\frac{\pi}{8}) \approx .20$, and coupled with our observations above we see that all $p$-th centers must lie on the imaginary axis between the points $.2i$ and $.5i$. In fact, the upper bound of $.5i$ can be improved by using the angle bisector of the angles at $1$ or $-1$, see Example \ref{isotri} in Section \ref{apps}. The reader may also have observed that the reflected circular domain does not do a good job of filling the triangle, and therefore it stands to reason that the lower bound may also be improved; more on this in the final section.

\vski

Finally, the following result can be useful when $U$ is mapped to itself by an {\it antiholomorphic} function $\bar f$ (this means that the conjugate of $\bar f$, $f(z)$, is holomorphic). We will denote the derivative of this function (with respect to $\bar z$ by $\overline{f'(z)}$). An example of this is when $U$ is an annulus, as will be explored in Section \ref{apps}.

\begin{prop}
\label{map U to U} Let $\overline{f}:U\longrightarrow U$ be antiholomorphic and consider
the two following sets
\[
\begin{alignedat}{1}\Omega & :=\{z\in U|\,\,|\bar f'(z)|<1\}\\
\Lambda & :=\{z\in U|\,\,|\bar f'(z)|=1\}.
\end{alignedat}
\]
If $\bar f(U\setminus\Omega)\subset \Omega$ and $\bar f_{|\Lambda}=id_{\Lambda}$,
then all $p$-th centers are contained in $\Omega$.
\end{prop}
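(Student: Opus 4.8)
The strategy is to reduce to Proposition \ref{prop:part.sym.axis.conformal} by factoring the antiholomorphic self-map $\bar f$ through a conformal one. Since $\bar f$ is antiholomorphic, writing $g(z) = \overline{\bar f(z)}$ gives a genuine holomorphic map, but $g$ maps $U$ not to $U$ but to $\bar U := \{\bar z : z \in U\}$. The cleanest route is to introduce the auxiliary domain $V := U \cup \bar U$ together with the conjugation $c(z) = \bar z$, which is a symmetry (in the antiholomorphic sense) — but to stay within the holomorphic framework of Proposition \ref{prop:part.sym.axis.conformal} I would instead argue directly with the coupling, mimicking the proof of that proposition. The key observation is that if $Z_t$ is a Brownian motion, then $\overline{Z_t}$ is also a Brownian motion, and more generally the ``reflection'' trick used throughout the paper can be run on $\bar f$ itself: $\bar f$ sends Brownian paths to time-changed Brownian paths, with $\kappa(t) = \int_0^t |\bar f'(Z_s)|^2\, ds$, exactly as in L\'evy's theorem (Theorem \ref{Levy}), because $|\bar f'(z)|^2 = |g'(z)|^2$ and the time-change depends only on the modulus of the derivative.

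The steps I would carry out are as follows. First, fix $a \in U \setminus \Omega$ and set $a' = \bar f(a) \in \Omega$ by hypothesis. Second, construct a coupled pair: let $Z_t$ start at $a'$, let $H$ be its hitting time of $\Lambda$, and define $\tilde Z_t = \bar f^{-1}(Z_t)$ for $t < H$ and $\tilde Z_t = Z_t$ for $t \geq H$; here one uses that $\bar f$ is an involution-like map on $\Lambda$ (the condition $\bar f|_\Lambda = \mathrm{id}_\Lambda$) so the two pieces match continuously at time $H$, and that $\bar f^{-1}$ is again antiholomorphic so $\tilde Z$ is a Brownian motion by the reflection/L\'evy argument. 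Third, compare exit times: on $\{\tau_U(a') \geq H\}$ the two processes agree after $H$, so the exit times from $U$ coincide; on $\{\tau_U(a') < H\}$, the process $\tilde Z$ up to time $\tau_U(a')$ lives in $\bar f^{-1}$ of a subset of $\Omega$, hence in $U \setminus \Omega \subseteq U$ — wait, more carefully: $\tilde Z_t = \bar f^{-1}(Z_t)$ and $Z_t \in U$ for $t < \tau_U(a')$, and since $\bar f(U \setminus \Omega) \subset \Omega \subseteq U$ the inverse image $\bar f^{-1}(U)$ contains $U \setminus \Omega$, so one needs the time-change inequality $|\bar f'| \geq 1$ on $U \setminus \Omega$ (equivalently $|(\bar f^{-1})'| \leq 1$ there) to conclude $\tau_U(\tilde Z) \geq \tau_U(Z)$ via the $\kappa(t) \geq \tilde\kappa(t)$ comparison as in Proposition \ref{prop:part.sym.axis.conformal}. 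Fourth, conclude $E_{a'}[\tau_U^p] \geq E_a[\tau_U^p]$, and upgrade to a strict inequality whenever $|\bar f'| > 1$ somewhere along the path before $H$ (which happens with positive probability since $a \notin \Omega \cup \Lambda$), using the finiteness assumption $E_w[\tau_U^p] < \infty$. Since every $a \in U \setminus (\Omega \cup \Lambda)$ thus admits a strictly better point $\bar f(a) \in \Omega$, and every $a \in \Lambda$ likewise fails to be optimal (it has a neighbourhood on the $U \setminus \Omega$ side mapping strictly inside $\Omega$, or one applies the same argument to a point just off $\Lambda$), no $p$-th center lies outside $\Omega$.

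The main obstacle I anticipate is the bookkeeping at the set $\Lambda$ and the matching of the two path-pieces at the hitting time $H$: one must check that $\bar f^{-1}$ is well-defined and single-valued on the relevant region (injectivity of $\bar f$), that $\bar f$ restricted to $\Lambda$ being the identity genuinely forces continuity of $\tilde Z$ at $t = H$, and that the time-change functions $\kappa, \tilde\kappa$ are handled correctly across $H$ (before $H$ one reflects, after $H$ one does not, so $\tilde\kappa$ has the form $\int_0^{t\wedge H}|\bar f'(\bar f^{-1}(Z_s))|^2 ds + \int_0^{(t-H)^+}|Z_{s+H}|^2 \cdot \text{(trivial)}\,ds$ — the post-$H$ piece must be arranged so the clocks agree, which is why $\bar f|_\Lambda = \mathrm{id}$ is needed). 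A secondary subtlety, as in Corollary \ref{circreflect}, is that $\bar f$ or $\bar f^{-1}$ may have singularities on $\partial U$ or at isolated points; these are harmless since Brownian motion a.s. avoids any fixed point and the path up to $\tau_U$ is a.s. a compact subset of $U$ bounded away from such singularities.
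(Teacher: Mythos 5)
Your proposal has the right ingredients (coupling across $\Lambda$, the L\'evy time change of Theorem \ref{Levy}, the clock comparison coming from $|\bar f'|\geq 1$ off $\Omega$, and $\bar f|_\Lambda=\mathrm{id}$ to glue the paths), but it runs the coupling in the inverted direction, and that is a genuine gap. You start the Brownian motion $Z$ at the good point $a'=\bar f(a)\in\Omega$ and try to produce the process from the bad point $a$ by setting $\tilde Z_t=\bar f^{-1}(Z_t)$ before the hitting time of $\Lambda$. The proposition does not assume $\bar f$ is injective or surjective, so $\bar f^{-1}$ need not exist; and even when $\bar f$ is injective, a path of $Z$ started in $\Omega$ has no reason to remain inside the image $\bar f(U)$ before it hits $\Lambda$, so $\bar f^{-1}(Z_t)$ is simply undefined along part of the path. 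You flag ``injectivity of $\bar f$'' as something to check, but it cannot be checked: it is not a hypothesis (your construction only works for anticonformal automorphisms such as the annulus involution $z\mapsto rR/\bar z$). The paper avoids the inverse altogether by mapping forward: it runs $Z$ from $z\in U\setminus\Omega$, takes $W_t=f(Z_{\kappa^{-1}(t)})$ with $\kappa(t)=\int_0^t|\bar f'(Z_s)|^2\,ds$, which by Theorem \ref{Levy} is an honest Brownian motion started at the good point with no injectivity needed, notes that $\kappa(H_\Lambda)\geq H_\Lambda$ because the path stays in $U\setminus\Omega$ (where $|\bar f'|\geq 1$) until it reaches $\Lambda$, and then appends one and the same independent Brownian motion $B$ to $Z$ at time $H_\Lambda$ and to $W$ at time $\kappa(H_\Lambda)$; continuity of the two concatenations is exactly where $\bar f|_\Lambda=\mathrm{id}$ is used, since $W_{\kappa(H_\Lambda)}=\bar f(Z_{H_\Lambda})=Z_{H_\Lambda}$.

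Even granting invertibility, two pieces of bookkeeping in your sketch do not work as written. First, $\tilde Z_t=\bar f^{-1}(Z_t)$ for $t<H$ followed by $\tilde Z_t=Z_t$ for $t\geq H$ is not a Brownian motion: the pulled-back piece must be run on its own L\'evy clock, and then it reaches the common point $Z_H\in\Lambda$ at a time $\leq H$ rather than at $H$, so the gluing must be performed at two different times for the two processes (this asymmetry of gluing times is precisely the mechanism of the paper's proof, not an obstacle to be smoothed away). Second, the comparison you state, $\tau_U(\tilde Z)\geq\tau_U(Z)$ with $\tilde Z$ started at $a$, points the wrong way: it would give $E_a[\tau_U^p]\geq E_{a'}[\tau_U^p]$, the opposite of your step-four conclusion; the correct statement is that the process started at the point of $\Omega$ arrives at the gluing point on $\Lambda$ later, hence exits later. (A minor remark: the strictness upgrade you sketch is not carried out in the paper, which settles for the weak inequality $E_z[\tau_U^p]\leq E_{\bar f(z)}[\tau_U^p]$ for $z\in U\setminus\Omega$.)
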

\begin{proof}
Let $Z_{t}$ be a Brownian motion starting at $z\in U\setminus\Omega$ and $H_{\Lambda}$ its
hitting time of $\Lambda$, and consider $W_{t}$ the Brownian motion
derived from $f(Z_{t})$, i.e
\[
W_{t}:=f(Z_{\kappa^{-1}(t)})
\]
where
\[
\kappa(t)=\int_{0}^{t}|\bar f'(Z_{s})|^{2}ds.
\]
 Now, we are going to construct two Brownian motions $\widetilde{Z}_{t}$
and $\widetilde{W}_{t}$ starting respectively at $z$ and $w:=f(z)$
such that $\widetilde{W}_{t}$ leaves $U$ before $\widetilde{Z}_{t}$.
as follows :
\begin{enumerate}
\item If $H_{\Lambda}<\tau_{U}^{{\scriptscriptstyle Z}}$ then run an independent
Brownian motion, say $B_{t}$, starting at $Z_{H_{\Lambda}}$and set
\[
\begin{alignedat}{1}\widetilde{Z}_{t} & :=Z_{t}\mathbf{1}_{\{t\leq H_{\Lambda}\}}+B_{t-H_{\Lambda}}\mathbf{1}_{\{H_{\Lambda}<t\}}\\
\widetilde{W}_{t} & :=W_{t}\mathbf{1}_{\{t\leq\kappa(H_{\Lambda})\}}+B_{t-\kappa(H_{\Lambda})}\mathbf{1}_{\{\kappa(H_{\Lambda})<t\}}.
\end{alignedat}
\]
$\widetilde{W}_{t}$ and $\widetilde{Z}_{t}$ are well defined Brownian
motions as
\[
\widetilde{W}_{\kappa(H_{\Lambda})}=W_{\kappa(H_{\Lambda})}=f(Z_{H_{\Lambda}})=Z_{H_{\Lambda}}=\widetilde{Z}_{H_{\Lambda}}.
\]
Now notice that
\[
\begin{alignedat}{1}\tau_{U}^{{\scriptscriptstyle \widetilde{Z}}} & =H_{\Lambda}+\inf\{t,\,\,B_{t}\notin U|B_{0}=Z_{H_{\Lambda}}\}\\
 & \leq\kappa(H_{\Lambda})+\inf\{t,\,\,B_{t}\notin U|B_{0}=Z_{H_{\Lambda}}\}\\
 & =\tau_{U}^{{\scriptscriptstyle \widetilde{W}}}.
\end{alignedat}
\]
\item If $H_{\Lambda}\geq\tau_{U}^{{\scriptscriptstyle Z}}$ then just set
$\widetilde{Z}_{t}=Z_{t}$ and $\widetilde{W}_{t}=W_{t}$. Therefore
\[
\tau_{U}^{{\scriptscriptstyle \widetilde{Z}}}\leq\kappa(\tau_{U}^{{\scriptscriptstyle \widetilde{Z}}})=\tau_{U}^{{\scriptscriptstyle \widetilde{W}}}.
\]
\end{enumerate}
In both cases, we have
\[
\tau_{U}^{{\scriptscriptstyle \widetilde{Z}}}\overset{{\scriptstyle a.s}}{\leq}\tau_{U}^{{\scriptscriptstyle \widetilde{W}}}.
\]
Hence
\[
E_{z}((\tau_{U}^{{\scriptscriptstyle \widetilde{Z}}})^p)\leq E_{w}((\tau_{U}^{{\scriptscriptstyle \widetilde{W}}})^p),
\]
which ends the proof.
\end{proof}

{\bf Remark:} Reflection over the circle, obtained above as a corollary of Proposition \ref{prop:part.sym.axis.conformal}, can just as easily be deduced as a corollary of Proposition \ref{map U to U}.

\section{Applications} \label{apps}

In this section we work through a series of examples that show how our results may be applied.

\begin{example} \label{halfdisk}
Let $U$ be the upper half-disc $\{|z|<1, Im(z)>0\}$. The imaginary axis is an axis of symmetry, and $U$ is $\Delta$-convex with respect to this axis, so all $p$-th centers lie on the imaginary axis. The line $\Delta=:\{Im(z)=\frac{1}{2}\}$ is clearly a partial symmetry axis with symmetric part $U \cap \{Im(z)>\frac{1}{2}\}$, and $U$ is $\Delta$-convex as well. Thus,
all $p$-centers belong
to the set $\{Re(z)=0,0\leq Im(z)\leq\frac{1}{2}\}$. Now, if we let $C$ be the circle $\{|z+i|= \sqrt{2}\}$, then $C$ passes through $1$ and $-1$, making an angle of $\frac{\pi}{4}$ at each point with the real axis. If we let $\AA = U \cap \II$ as before, with $\II$ the inside of $C$, then $\sigma_C(\AA) = \BB$, where $\BB = U \cap \OO$ and $\OO$ is the outside of $C$. By Corollary \ref{circreflect}, no $p$-th center lies in $\AA$. Thus, all $p$-th centers lie on the line segment $\{Re(z)=0,\sqrt{2}-1 \leq Im(z) \leq \frac{1}{2}\}$, which is in bold in the figure below.
\end{example}

\begin{figure}[H]
\centering{}\includegraphics[width=11cm,height=11cm,keepaspectratio]{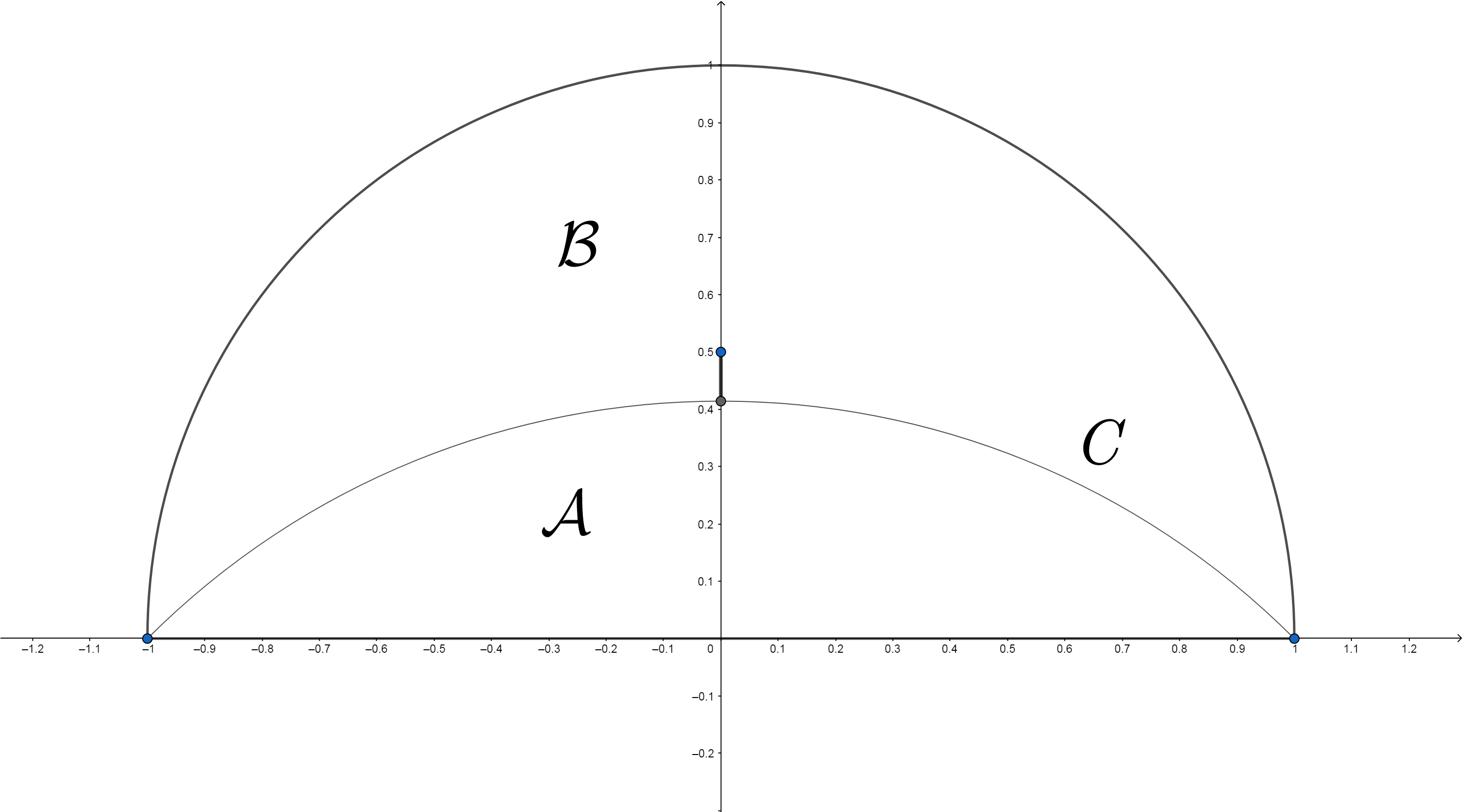}\caption{Reflection over a circle}
\end{figure}

\begin{example} \label{isotri}
Now let $U$ be an isosceles triangle with vertices at $-1, 1$, and $Ni$ with $N>0$. It will be convenient for us to index $U$ by the angles at $1$ and $-1$, so if we let $\thh$ be this angle then $N = \tan \thh$. Proposition \ref{prop:full.sym.axis} tells us that all $p$-th centers lie on the imaginary axis. We have seen already from the example discussed in connection with Proposition \ref{prop:part.sym.axis.conformal} that all $p$-th centers must lie below $\frac{M}{2}i$, however we will show now how this can be improved. Let $B$ be the angle bisector of one of the base angles of $U$. $B$ is a partial symmetry axis of $U$, with symmetric side given by the component of $U \backslash B$ corresponding to the shorter side of the triangle. Thus, if $\thh > \frac{\pi}{3}$, then all $p$-th centers must lie above $B$, while if $\thh < \frac{\pi}{3}$, then all $p$-th centers must lie below $B$. Now let $M$ be the perpendicular bisector of the edge connecting $1$ to $Mi$ (this is often referred to as the {\it mediator}). This is also a partial symmetry axis of $U$, and the symmetric side is the component of $U \backslash M$ which does not contain $-1$. Thus, if $\thh > \frac{\pi}{3}$, then all $p$-th centers must lie below $M$, while if $\thh < \frac{\pi}{3}$, then all $p$-th centers must lie above $M$. Thus, the intersections of $M$ and $B$ with the imaginary axis provide upper and lower bounds for all $p$-th centers, although which is the upper bound and which is the lower bound depends on $\thh$. The following figure demonstrates this phenomenon ($\Delta$ denotes the imaginary axis).

\begin{figure}[H]
\centering{}\includegraphics[width=14cm,height=12cm,keepaspectratio]{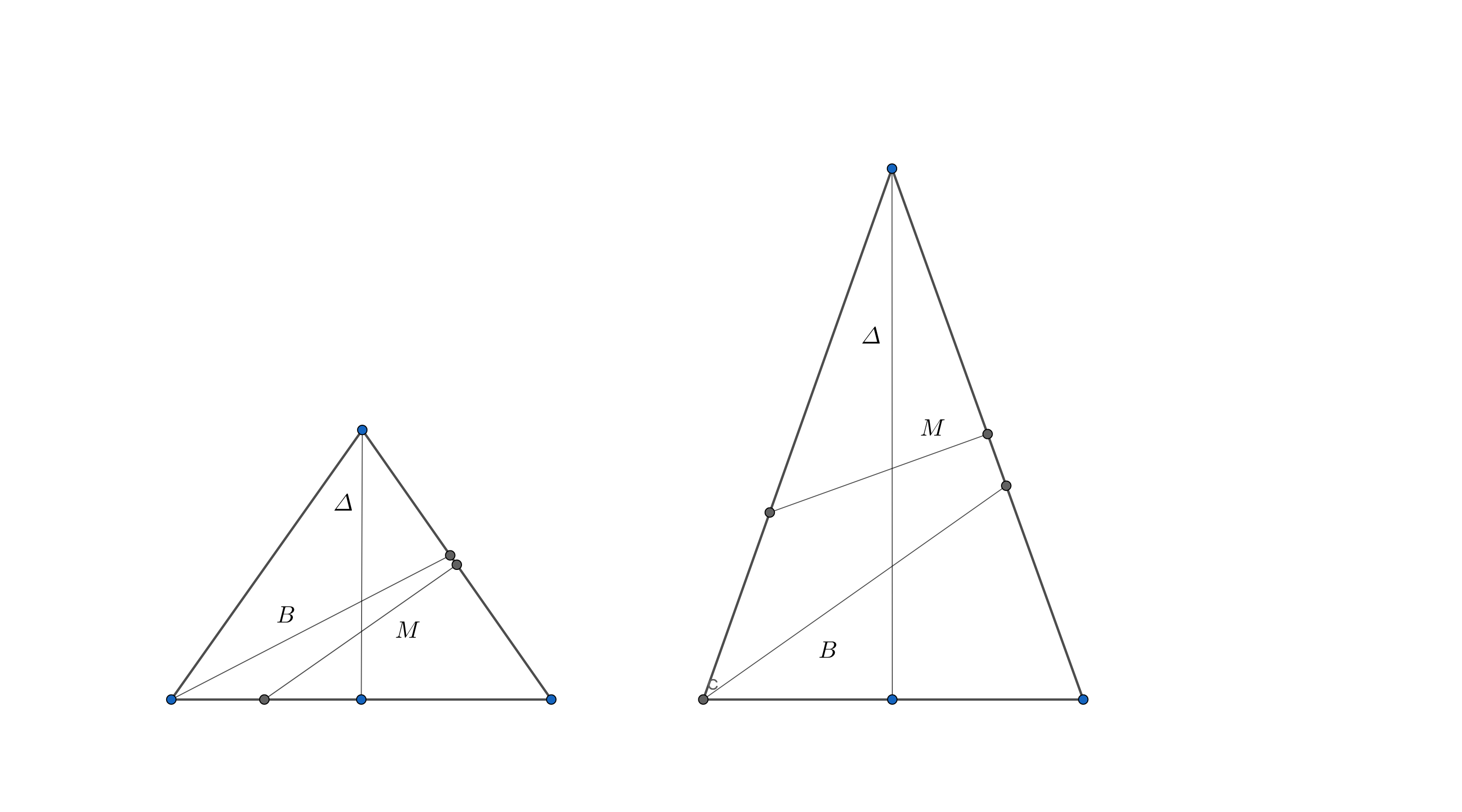}\caption{The angle bisector and mediator}
\end{figure}

Naturally they coincide at $\thh = \frac{\pi}{3}$. It can be checked that, regardless of $\thh$, this gives a better upper bound than $\frac{N}{2}$, which was given by reflection over $\{Im(z) = \frac{N}{2}\}$. A bit of Euclidean geometry shows that the intersection of $B$ with the imaginary axis is at the point $\tan(\frac{\thh}{2})i$, and the intersection of $M$ with the imaginary axis is at the point $(\frac{1}{\tan \thh}- \frac{1}{\sin 2\thh})i$. Furthermore, we always have as a lower bound the intersection of the imaginary axis and the circle passing through $1$ and $-1$, making an angle of $\frac{\thh}{2}$ with the real axis; this follows from Corollary \ref{circreflect} as above. This point is $\frac{1-\cos(\frac{\thh}{2})}{\sin(\frac{\thh}{2})}$. The following graph shows these upper and lower bounds; all $p$-th centers must lie in the regions labelled $\Omega$.

\begin{figure}[H]
\centering{}\includegraphics[width=9cm,height=9cm,keepaspectratio]{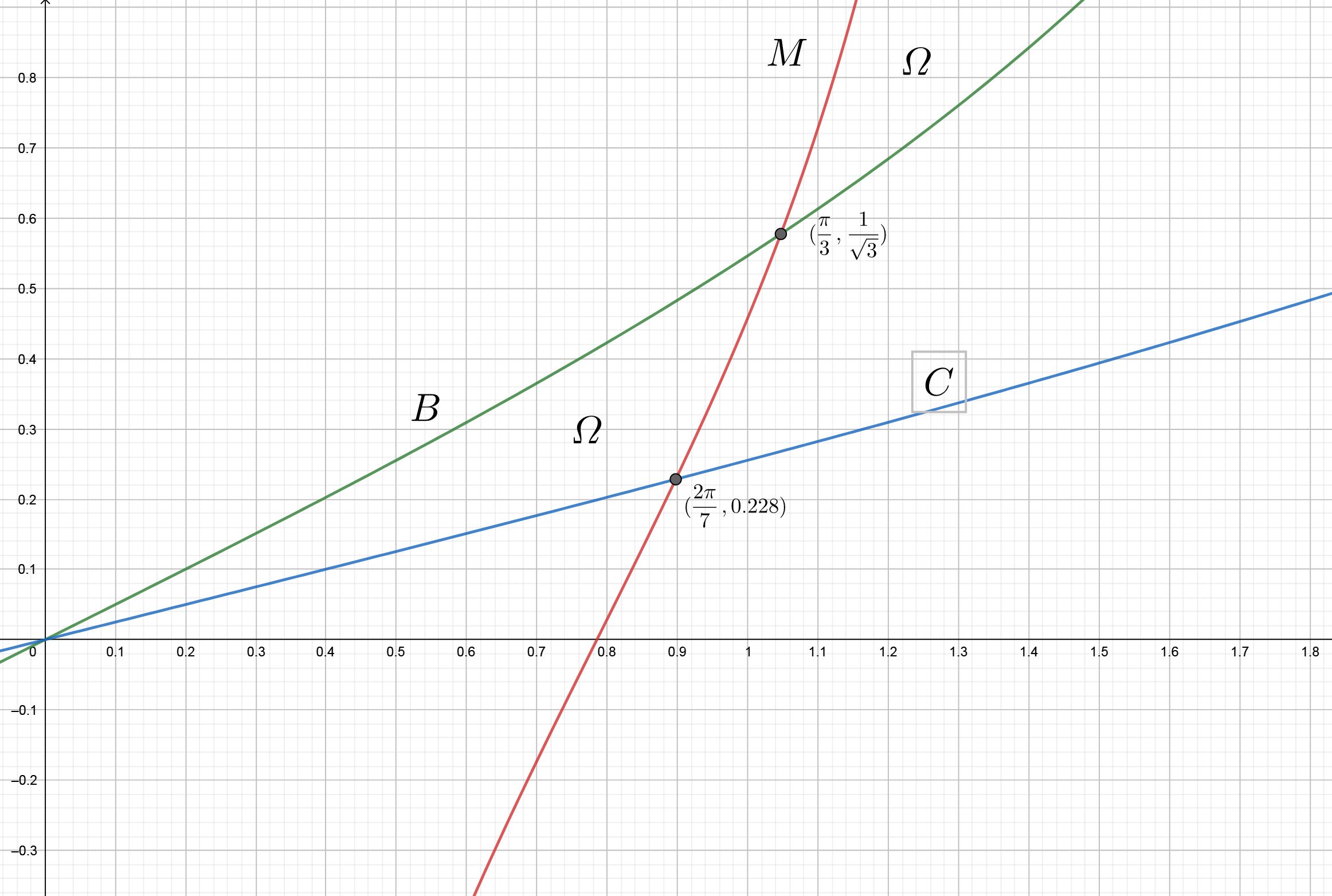}\caption{Upper and lower bounds for $p$-th centers}
\end{figure}

{\bf Remark:} We believe that a better lower bound can be achieved through numerical conformal mapping; more on this in the final section.

\end{example}

\begin{example}
\label{annulus}Let $\mathcal{A}_{r,R}$ be the annulus $\{r<|z|<R\}$.
 Then all $p$-centers lie in $\{\sqrt{{\scriptstyle rR}}<|z|<\frac{R+r}{2}\}$
\end{example}

\begin{proof}
Consider $f(z)=\frac{rR}{\overline{z}}$ which maps $\mathcal{A}_{r,R}$
to itself. Under the same notation as in Proposition \ref{map U to U} we have
\[
\begin{alignedat}{1}\Omega & :=\{\sqrt{rR}<|z|<R\}\\
\Lambda & :=\{|z|=\sqrt{rR}\}
\end{alignedat}
\]
and we can check easily that $f$ satisfies the requirements of Proposition \ref{map U to U}).
Therefore we can eliminate $U\setminus\Omega$ from consideration, and we obtain the lower bound $\sqrt{{\scriptstyle rR}}$.
In order to get the upper bound $\frac{R+r}{2}$ we can see, as illustrated
by \textbf{Fig}. (\ref{fig:-is-limited}), that the line $\Delta$
is a partial symmetry axis. The result follows.

\end{proof}

\begin{figure}[H]
\centering{}\includegraphics[width=9cm,height=9cm,keepaspectratio]{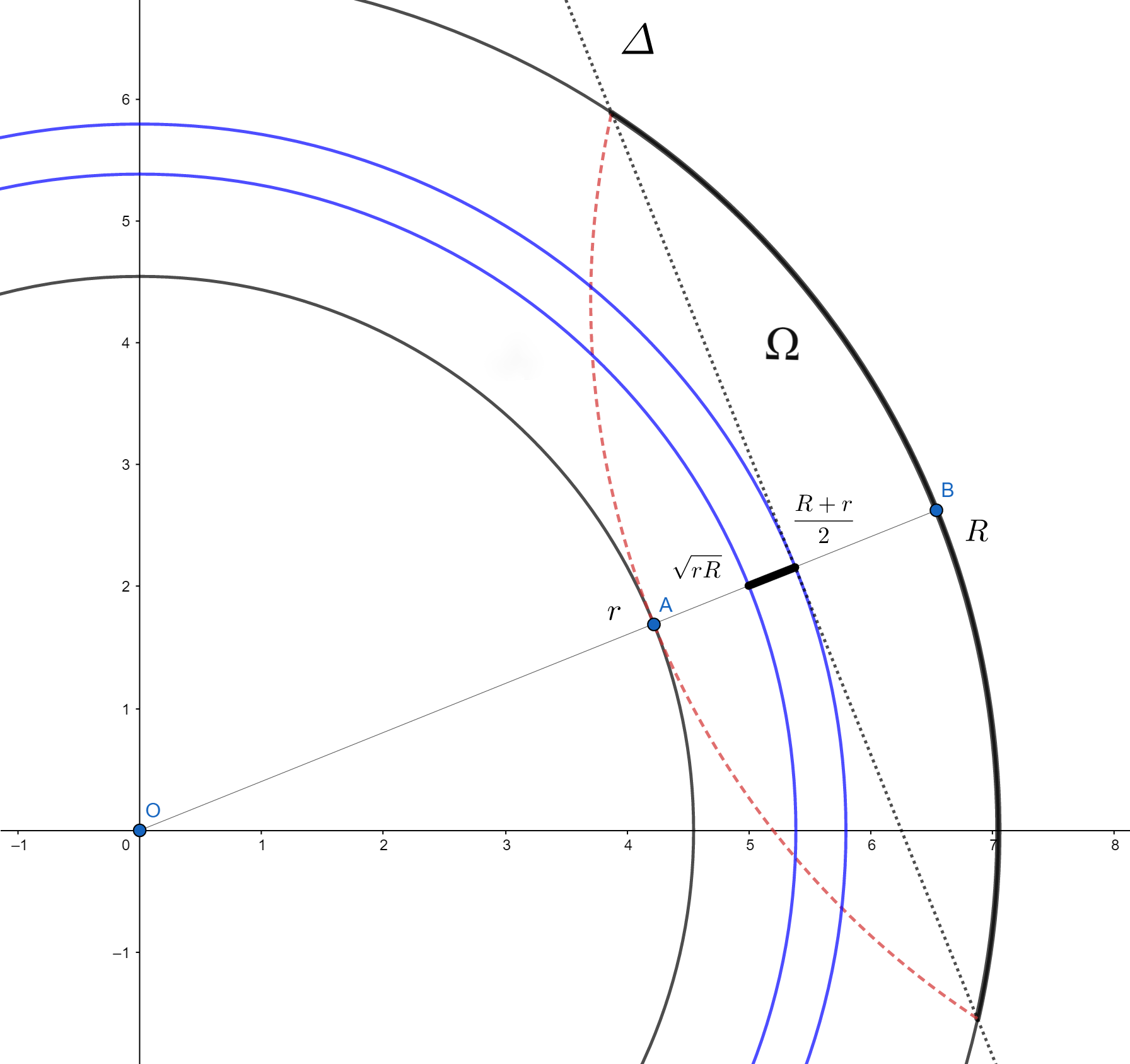}\caption{ Bounds for the annulus \label{fig:-is-limited}}
\end{figure}

{\bf Remarks:} \begin{itemize} \label{}

\item Another way to get the same lower bound as above is to note that Proposition \ref{prop:part.sym.axis.conformal} extends to non-injective maps in suitable situations. We may use the map
\[
f:\begin{alignedat}{1}\left\{ \ln r<Re(z)<\ln R\right\}  & \longrightarrow\mathcal{A}_{r,R}\\
z & \longmapsto e^{z}
\end{alignedat}
\]

and apply this extension of Proposition \ref{prop:part.sym.axis.conformal} with reflection axis $\{Re(z) = \frac{\ln R + \ln r}{2}\}$ in order to obtain the result.

\item It should be mentioned that an explicit formula for the first moment can be obtained by Dynkin's formula, and it is

\[
E_{z}(\tau_{\mathcal{A}_{r,R}})=\frac{R^{2}\ln\left(\frac{|z|}{r}\right)-r^{2}\ln\left(\frac{|z|}{R}\right)}{\ln\frac{R}{r}}-|z|^{2}.
\]

This can be shown to be maximal at $|z|=\sqrt{\frac{R^{2}-r^{2}}{2\ln\frac{R}{r}}}$. Our estimates are therefore not necessary for the first moments, but as an aside we obtain the inequality

\[
\sqrt{Rr}<\sqrt{\frac{R^{2}-r^{2}}{2\ln\frac{R}{r}}}<\frac{R+r}{2}.
\]

Setting $a=R^2$, $b=r^2$, and squaring the inequalities gives the following:

\[
\sqrt{ab}<\frac{a-b}{\ln\frac{a}{b}}<\frac{a+b + 2\sqrt{ab}}{4} \leq \frac{a+b}{2}.
\]

The quantity $\frac{a-b}{\ln\frac{a}{b}}$ is of great importance in the study of heat flow, and in that context it is known as the {\it logarithmic mean temperature difference}, or LMTD (see \cite{kern}). We have therefore given a new proof of the fundamental fact that the LMTD lies between the arithmetic and geometric means, and in fact have proved that the upper bound can be lowered to the arithmetic mean of the arithmetic and geometric means.

\end{itemize}

\begin{example} \label{hyperbola}
Let $\mathscr{H}$ be the region $\{|x|>|y|, x^2-y^2<1\}$; this is the region bounded by the lines $y = \pm x$ and the hyperbola $x^2-y^2=1$, see the figure below.

It is perhaps not obvious for which $p$ we have $E_{w}[\tau_{\mathscr{H}}^p]< \ff$, however we can show that $E_{w}[\tau_{\mathscr{H}}^p]< \ff$ for any $p>0$ and $w \in \mathscr{H}$, as follows. $\mathscr{H}$ is contained in the union of two infinite strips which are othogonal. Any strip has all moments of its exit time finite: Brownian motion is rotation invariant, so the moments are the same as for a horizontal strip, and these moments in turn are the same as for a one-dimensional Brownian motion from a bounded interval, since that is what we obtain when we project the Brownian motion onto the imaginary axis; these moments are well known to be finite for all $p$, and in fact they can be calculated explicitly for integer $p$ using the Hermite polynomials. We would like to conclude that the union of these two strips must then have finite $p$-th moment, however easy examples show that it is not necessarily the case that the union of two domains with finite $p$-th moment must itself have finite $p$-th moment. A method does exist for reaching the desired conclusion, though, and it is contained in Theorem 3 and Lemmas 1 and 2 of \cite{mebabyme}. It is strightforward to verify that our infinite strips satisfy the required conditions: their interesection is bounded, and boundary arcs intersect at non-zero angles. Therefore the exit time for their union has finite $p$-th moments for all $p$, and thus so does $\mathscr{H}$. See \cite{mebabyme} for details.

Now let us see how our methods can be used to localize the $p$-th centers. The real axis is an axis of symmetry, however the domain is not $\Delta$-convex, so we may not apply Proposition \ref{prop:full.sym.axis}. However, all $p$-centers do lie on the real axis, and we may prove this as follows. The map $f(z) = \sqrt{z}$ maps the strip $\{0<Re(z)<1\}$ conformally onto $\mathscr{H}$. Any horizontal line can be used in Proposition \ref{prop:part.sym.axis.conformal}, and we note that $|f'(z)| = \frac{1}{2\sqrt{|z|}}$ is monotone decreasing in $|z|$ and therefore in $|Im(z)|$. Thus, all $p$-th centers must lie on the image of the real axis under $f$, which is again the real axis. So we need only consider point on $\RR$. The line $\{Re(z)=\frac{1}{2}\}$ is a partial symmetry axis, which gives a lower
bound of $\frac{1}{2}$ for all $p$-th centers. For an upper bound, note that $\{Re(z) = \frac{1}{2}\}$ is another axis of symmetry of $\{0<Re(z)<1\}$, and the monotonicity of the derivative shows again via Proposition \ref{prop:part.sym.axis.conformal} that we only need to look in the region $f(\{0<Re(z)<\frac{1}{2}\})$, which is the region $\{x^2-y^2< \frac{1}{2}\}$ inside $U$. This gives an upper bound of $\frac{1}{\sqrt{2}}$ on the real axis. Thus, all $p$-centers lie on $\{Im(z) = 0, \frac{1}{2} < Re(z) < \frac{1}{\sqrt{2}}\}$. This set is in bold below.

\begin{figure}[H]
\centering{}\includegraphics[width=9cm,height=9cm,keepaspectratio]{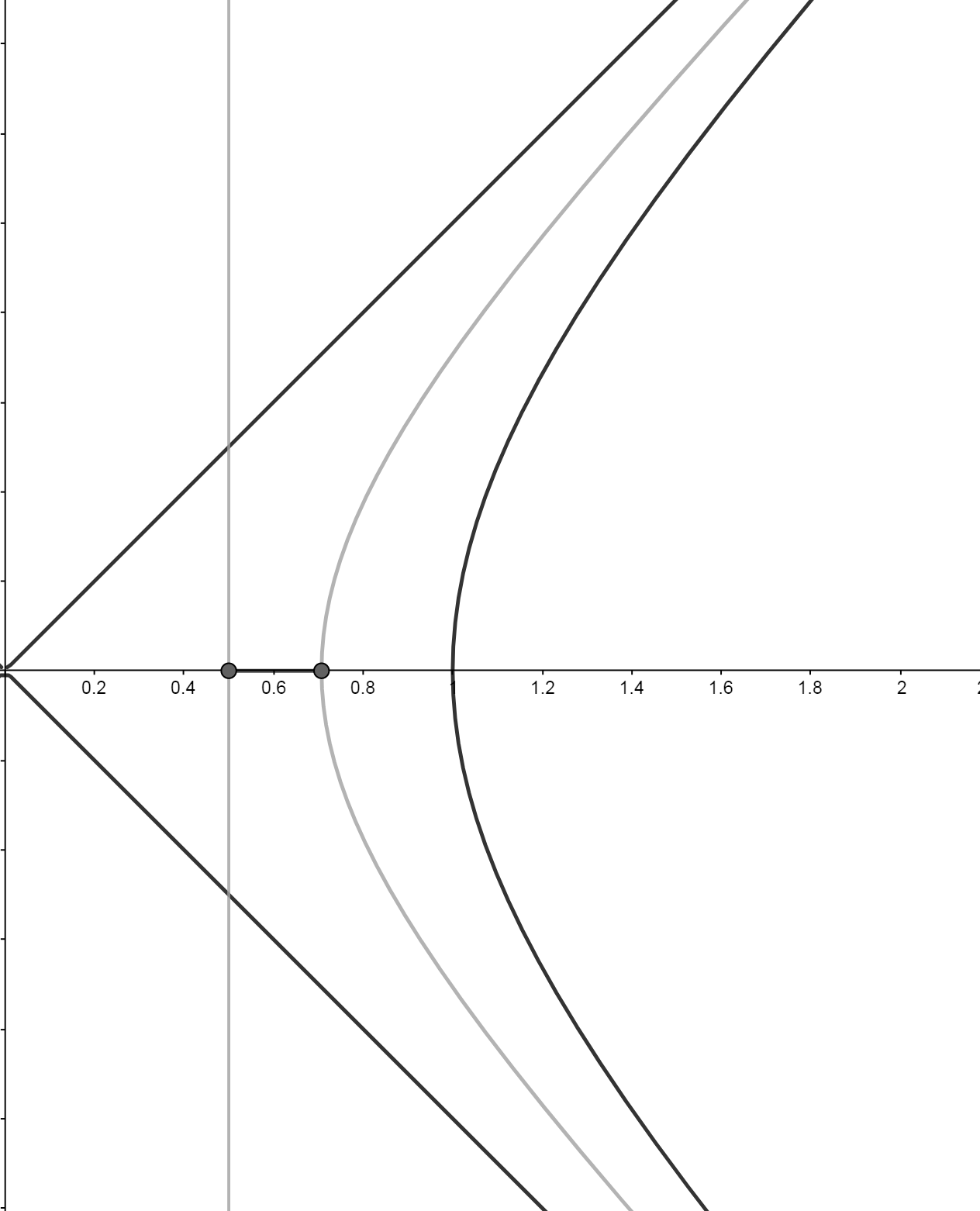}\caption{ Bounds for the hyperbolic region } \label{hyperbolapic}
\end{figure}

\end{example}

\begin{example} \label{crescent}
Let $\mathscr{C}_{R}$ be the crescent-like shape limited by the
two circles $\{|z-\frac{1}{2}| = \frac{1}{2}\}$ and $\{|z-\frac{R}{2}| = \frac{R}{2}\}$ (see the figure below). $\mathscr{C}_{R}$ is the image of the region $\{\frac{1}{R}< Re(z) < 1\}$ under the conformal map $f(z) = \frac{1}{z}$. Note that $|f'(z)| = \frac{1}{|z|^2}$ is monotone decreasing in $|z|$, so by the same argument as in Example \ref{hyperbola} all $p$-th centers lie on the real axis. Furthermore $\{Re(z) = \frac{R+1}{2}\}$ is a partial symmetry axis for $\mathscr{C}_{R}$, and this allows us to eliminate the region $\{Re(z) > \frac{R+1}{2}$ from consideration. We may also use axis of symmetry $\{Re(z) = \frac{1+\frac{1}{R}}{2}\}$ in order to conclude via Proposition \ref{prop:part.sym.axis.conformal} that we can exclude the region $f(\{\frac{1}{R}<Re(z)<\frac{1+ \frac{1}{R}}{2}\})$, which is the region $\mathscr{C}_{R} \cap \{|z-\frac{R}{R+1}| < \frac{R}{R+1}\}$, in the search for $p$-th centers. We see that all $p$-centers lie on the interval $\{Im(z)=0, \frac{2R}{R+1} < Re(z) < \frac{R+1}{2}\}$, which is in bold below for $R=2$.

\begin{figure}[H]
\centering{}\includegraphics[width=11cm,height=11cm,keepaspectratio]{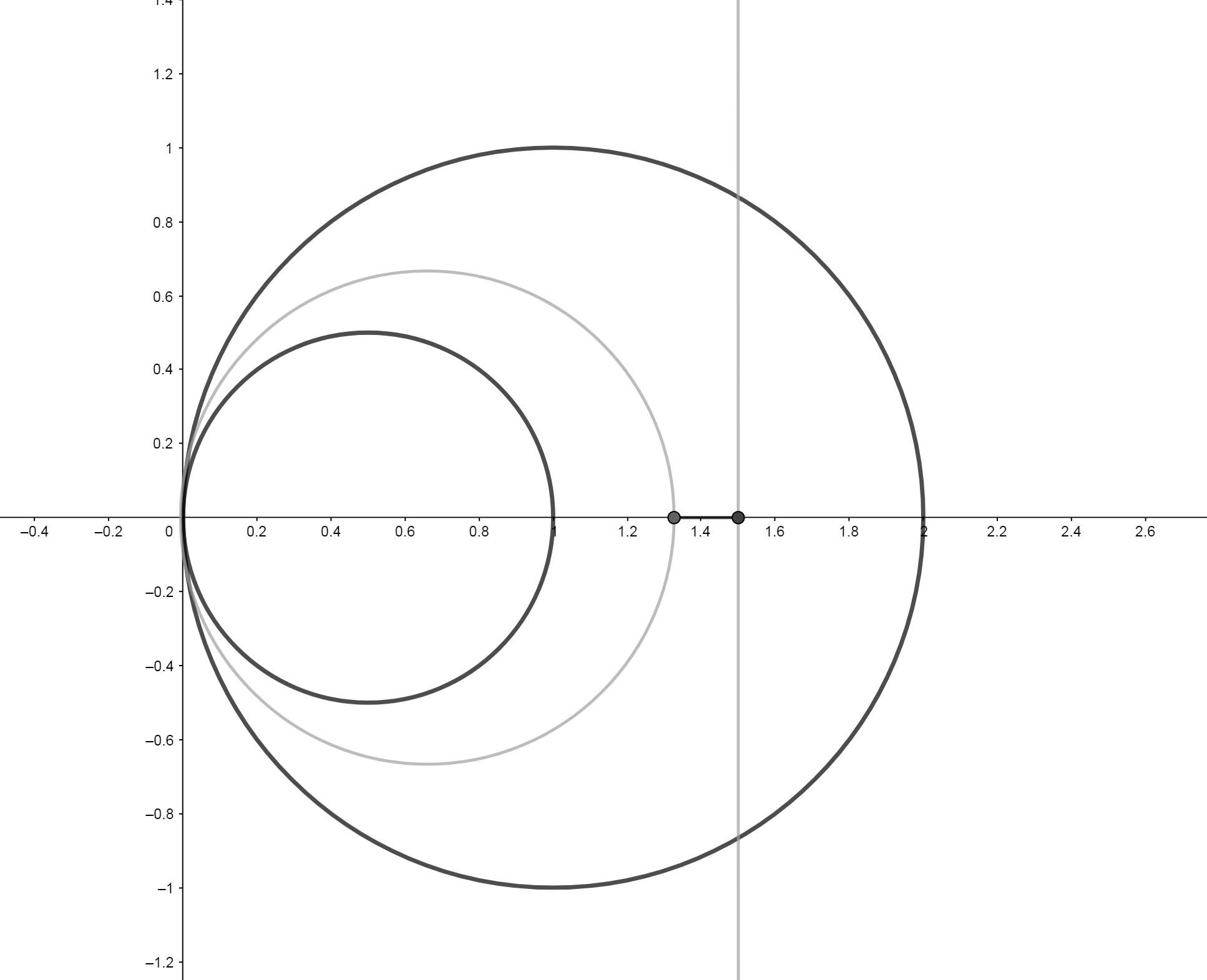}\caption{ Bounds for the crescent region \label{fig:-is-limited2}} \label{crescentpic}
\end{figure}

\end{example}

\section{Concluding remarks}

As remarked earlier, in Figure \ref{trireflect} the regions $\AA$ and $\BB$ do not fill all of $U$, and it is natural to search for a a better bound by finding a conformal map that fills the entire domain. Let us consider the Schwarz-Christoffel transformation sending the unit disk to $U(\thh)$ given by (see \cite[Ch. 2]{tref})

$$
f(z) = A + C \int_{0}^{z} (1-w)^{\frac{\thh}{\pi}-1}(1+w)^{\frac{\thh}{\pi}-1} (1+iw)^{\frac{-2\thh}{\pi}}dw
$$

\noindent for appropriate choices of constants $A$ and $C$; note that this is chosen so that $1$ and $-1$ are mapped to the base angles, and $i$ is mapped to the top angle. We have

$$
|f'(z)| = |C||1-z|^{\frac{\thh}{\pi}-1}|1+z|^{\frac{\thh}{\pi}-1} |1+iz|^{\frac{-2\thh}{\pi}}.
$$

It can be checked then that $|f'(z)| > |f'(\bar z)|$ whenever $Re(z)>0$, so Proposition \ref{prop:part.sym.axis.conformal} implies that no $p$-th centers can be found in the image of $\DD \cap \{Re(z)<0\}$. From this point, a numerical method can be employed, and the resulting bound should improve the one we found, if desired.

As was mentioned in connection with $\Delta$-convexity, there are some purely geometrical consequences of our results. In that context, the following may be proved.

\begin{prop} \label{}
Suppose a domain $U$ is $\Delta$-convex with respect to two parallel symmetry axes. Then we can find $a \in [-\ff,\ff)$ and $b \in (-\ff,\ff]$ so that $U$ is a rotation of the domain $\{a < Re(z) < b\}$; in other words, $U$ is all of $\CC$, is a half-plane, or is an infinite strip.
\end{prop}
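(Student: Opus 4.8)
## Proof proposal

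The plan is to work directly with the two symmetry axes and exploit $\Delta$-convexity to force them to be parallel translates of one another in a very rigid way, and then to push the $\Delta$-convexity hypothesis across all intermediate parallel lines. First I would normalize coordinates: after a rotation and translation we may assume the two parallel symmetry axes are the vertical lines $\ell_0 = \{Re(z) = 0\}$ and $\ell_d = \{Re(z) = d\}$ for some $d > 0$. Write $\sigma_0$ and $\sigma_d$ for the reflections across these lines; each maps $U$ onto $U$. The key observation is that the composition $\sigma_d \circ \sigma_0$ is the horizontal translation $z \mapsto z + 2d$, and since this is a bijection of $U$ onto itself, $U$ is invariant under translation by $2d$ (and hence by $2kd$ for every $k \in \ZZ$).

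Next I would use $\Delta$-convexity with respect to $\ell_0$ to upgrade this discrete translation invariance to convexity in the horizontal direction. Fix a point $w = x + iy \in U$. For any integer $k$, the point $w + 2kd$ lies in $U$, and $\sigma_0(w + 2kd) = -x - 2kd + iy$ also lies in $U$; by $\Delta$-convexity with respect to $\ell_0$, the entire horizontal segment joining $w + 2kd$ to $-x - 2kd + iy$ lies in $U$. As $k \to \pm\infty$ these segments (at fixed height $y$) cover larger and larger horizontal intervals, and their union is an interval of real parts that is either all of $\RR$ or a half-line; in either case, for each height $y$ for which $U$ meets the line $\{Im(z) = y\}$ at all, the full horizontal slice $U \cap \{Im(z) = y\}$ is forced (via these nested segments together with translation invariance) to be an interval unbounded on at least one side, and in fact the same interval for every such $y$. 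Concretely: if $x_0 + iy \in U$ then $x_0 + 2kd + iy \in U$ for all $k$, and the connecting segments show the slice is an interval; using $\sigma_0$-invariance of that slice it must be symmetric about $0$, hence of the form $(-R, R)$ or all of $\RR$; but translation by $2d$ must preserve it, which is impossible for a bounded symmetric interval unless it is empty — so every nonempty horizontal slice is all of $\RR$.

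Finally I would assemble the conclusion: the set of heights $\{y : U \cap \{Im(z)=y\} \neq \emptyset\}$ is, since $U$ is open and connected and every nonempty slice is a full horizontal line, an open interval $(a', b')$ with $-\ff \le a' < b' \le \ff$; thus $U = \{a' < Im(z) < b'\}$, which is $\CC$, a half-plane, or a strip. Rotating back gives the statement with $a, b$ and $Re(z)$ as in the proposition (with the convention $a \in [-\ff,\ff)$, $b \in (-\ff,\ff]$). The main obstacle I anticipate is the middle step: carefully justifying that every nonempty horizontal slice is \emph{all} of $\RR$ rather than merely an unbounded-on-one-side interval, and ruling out pathologies where the slice could be, say, a half-line — this is exactly where translation invariance by $2d$ must be combined with the $\sigma_0$-symmetry of each slice to eliminate the half-line case, and where one must be slightly careful that $\Delta$-convexity is being applied to genuine points of $U$. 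One should also double-check the degenerate case $d$ arbitrarily small is handled uniformly (it is, since only translation invariance by $2d$ plus one reflection is used), and confirm connectedness of $U$ is not broken by the slice argument.
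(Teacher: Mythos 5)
Your argument is correct. Note that the paper only asserts this proposition ("the following may be proved") and gives no proof, so there is nothing to compare against; judged on its own, your proof is complete. The key steps all check out: the composition of the two reflections $\sigma_d\circ\sigma_0$ is translation by $2d$, so $U$ is invariant under $z\mapsto z+2kd$; applying $\Delta$-convexity with respect to $\ell_0$ to the translated points $(x+2kd)+iy$ puts the horizontal segments with real part in $[-|x+2kd|,|x+2kd|]$ inside $U$, and these already exhaust the whole line as $k\to\infty$, so every nonempty horizontal slice is all of $\RR$; then openness and connectedness of $U$ make the set of admissible heights an open interval, giving $\CC$, a half-plane, or a strip after undoing the rotation. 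Two small remarks: your intermediate discussion (slice is an interval, symmetric about $0$, of the form $(-R,R)$, ruled out by translation invariance) is redundant, since the segments obtained from the translates of a single point already cover $\RR$ directly -- and the aside "the same interval for every such $y$" is only justified a posteriori; and you implicitly assume the two axes are distinct ($d>0$), which is the intended reading of "two parallel symmetry axes" (with coincident axes the statement fails, e.g.\ for a disk). It is also worth observing that you only use $\Delta$-convexity with respect to one of the two axes, together with both being symmetry axes, so your argument proves a marginally stronger statement than the one claimed.
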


As a corollary of this, and of our probabilistic results above, we obtain the following.

\begin{corollary} \label{}
Suppose $U$ is a domain which is not all of $\CC$, a half-plane, or an infinite strip. Then if there are multiple axes of symmetry to which $U$ is $\Delta$-convex then they all meet at a unique point.
\end{corollary}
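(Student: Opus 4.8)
The plan is to combine the Proposition stated immediately above with Proposition~\ref{prop:full.sym.axis}. Let $\mathcal{S}$ denote the set of symmetry axes of $U$ with respect to which $U$ is $\Delta$-convex, so $\mathcal{S}$ has at least two elements by hypothesis. First I would note that no two members of $\mathcal{S}$ can be parallel: two distinct parallel members would, by the preceding Proposition, force $U$ to be $\CC$, a half-plane, or an infinite strip, which is excluded. Hence any two members of $\mathcal{S}$ are distinct non-parallel lines and so meet in exactly one point; if $\mathcal{S}$ has exactly two members this is already the desired conclusion, and from now on I assume $\mathcal{S}$ has at least three.

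Next I would fix $\Delta_1,\Delta_2\in\mathcal{S}$, let $q$ be their unique point of intersection, and argue that $q$ is the unique $p$-th center of $U$. Indeed, Proposition~\ref{prop:full.sym.axis} applied to $\Delta_1$ shows every $p$-th center lies on $\Delta_1$, and applied to $\Delta_2$ shows every $p$-th center lies on $\Delta_2$; hence every $p$-th center equals $q$. Granting that $U$ has at least one $p$-th center (discussed below), $q$ is it. Now for any further $\Delta\in\mathcal{S}$, Proposition~\ref{prop:full.sym.axis} applied to $\Delta$ shows the $p$-th center $q$ lies on $\Delta$. As $\Delta$ was arbitrary, every member of $\mathcal{S}$ passes through $q$, and since no two are parallel they meet pairwise only at $q$; thus all the axes pass through the single point $q$, which is the assertion.

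The step that needs genuine care — and that I expect to be the main obstacle — is the claim that $U$ possesses a $p$-th center, i.e. that $h_p(a):=E_a[\tau_U^p]$ attains its supremum on $U$. The function $h_p$ is continuous and finite (our standing assumption), and decays to $0$ at every regular boundary point. Using the strict form of the inequality in the proof of Proposition~\ref{prop:full.sym.axis} along $\Delta_1$ and then $\Delta_2$ (both reflections fix $q$) gives $h_p(a)<\sup_U h_p$ for all $a\in U\setminus\{q\}$, so the supremum, if attained, is attained only at $q$. To see it is attained one would show that some superlevel set $\{a\in U: h_p(a)\ge c\}$ with $c<\sup_U h_p$ is compact: it avoids a neighborhood of $\partial U$ by the boundary decay, and it is bounded because two non-parallel members of $\mathcal{S}$ produce a nontrivial rotational symmetry of $U$ about $q$, which together with $\Delta$-convexity prevents $U$ from being wide enough at infinity for $h_p$ to stay near its supremum along an escaping sequence (in every example of the paper $U$ is either bounded or becomes strip-like at infinity, where $h_p$ is bounded strictly below $\sup_U h_p$). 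Alternatively one could avoid probability altogether: pairwise non-parallel, non-concurrent members of $\mathcal{S}$ generate, through compositions of the associated reflections, a nontrivial translation under which $U$ is invariant, and combining this translation with the reflection symmetries and $\Delta$-convexity forces $U$ into the excluded list; this route, however, amounts to re-deriving the preceding Proposition and is the more delicate of the two.
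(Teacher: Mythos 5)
Your main line is exactly the argument the paper intends (the paper prints no proof, but the remark following Proposition \ref{prop:full.sym.axis} spells out the same reduction): two axes in $\mathcal{S}$ cannot be parallel by the preceding proposition, and Proposition \ref{prop:full.sym.axis} forces every $p$-th center onto every axis in $\mathcal{S}$, so concurrency follows once a $p$-th center is known to exist. You are also right that the existence of a $p$-th center is the one step that is not free, and the paper is silent on it.

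The way you propose to close that gap is, however, the weak point of your write-up. The compactness-of-superlevel-sets sketch rests on claims ($h_p$ decays to $0$ at every regular boundary point; $U$ is ``strip-like at infinity'' where $h_p$ stays strictly below its supremum) that are not justified for a general $\Delta$-convex domain and would need real work; the ``alternative'' geometric route is, as you say, just a harder re-proof of the preceding proposition. There is a much shorter closure using only what is already proved. Let $q=\Delta_1\cap\Delta_2$ and, for $a\in U$, set $a_0=a$, $a_{2k+1}=\pi_{\Delta_1}(a_{2k})$, $a_{2k+2}=\pi_{\Delta_2}(a_{2k+1})$, where $\pi_{\Delta}(z)=\tfrac12\bigl(z+\sigma_{\Delta}(z)\bigr)$ is the orthogonal projection onto $\Delta$. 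Each $a_n$ lies in $U$ by $\Delta$-convexity, the proof of Proposition \ref{prop:full.sym.axis} gives $h_p(a_n)\leq h_p(a_{n+1})$, and since $\Delta_1$ and $\Delta_2$ are non-parallel the iterates contract geometrically to $q$. Interior continuity of $a\mapsto E_a[\tau_U^p]$ (a standard fact, implicitly used whenever the paper speaks of $p$-th centers) then yields $h_p(q)\geq h_p(a)$ for every $a\in U$, so $q$ is a $p$-th center and your concluding step goes through verbatim, with no compactness or decay-at-infinity argument needed.
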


\section{Acknowledgements}

The authors would like to thank Paul Jung, Fuchang Gao, and Lance Smith for helpful conversations. We are also grateful to several anonymous referees for helpful comments.

\bibliographystyle{IEEEtranN}
\bibliography{bibfile,\string"C:/Users/mbou0007/Google Drive/Maheref\string"}

\end{document}